\definecolor{Red}{rgb}{1.00, 0.00, 0.00}
\definecolor{DarkGreen}{rgb}{0.00, 1.00, 0.00}
\definecolor{Blue}{rgb}{0.00, 0.00, 1.00}
\definecolor{Cyan}{rgb}{0.00, 1.00, 1.00}
\definecolor{Magenta}{rgb}{1.00, 0.00, 1.00}
\definecolor{DeepSkyBlue}{rgb}{0.00, 0.75, 1.00}
\definecolor{DarkGreen}{rgb}{0.00, 0.39, 0.00}
\definecolor{SpringGreen}{rgb}{0.00, 1.00, 0.50}
\definecolor{DarkOrange}{rgb}{1.00, 0.55, 0.00}
\definecolor{OrangeRed}{rgb}{1.00, 0.27, 0.00}
\definecolor{DeepPink}{rgb}{1.00, 0.08, 0.57}
\definecolor{DarkViolet}{rgb}{0.58, 0.00, 0.82}
\definecolor{SaddleBrown}{rgb}{0.54, 0.27, 0.07}
\definecolor{Black}{rgb}{0.00, 0.00, 0.00}
\definecolor{dark-magenta}{rgb}{.5,0,.5}
\definecolor{myblack}{rgb}{0,0,0}
\definecolor{darkgray}{gray}{0.5}
\definecolor{lightgray}{gray}{0.75}
\newcommand{\rr}{\mathbb{R}}
\newcommand{\p}{\partial}
\theoremstyle{plain}  
\newtheorem{theorem}{Theorem}
\newtheorem{proposition}{Proposition}
\newtheorem{lemma}{Lemma}
\theoremstyle{definition}
\begin{document}

\title{
On the 
well-posedness of a nonlinear diffusive  \\ SIR epidemic model
}
  
\author{ Curtis Holliman and Harry Prieto}

\keywords{
SIR equations,
diffusion equations,
Kermack-McKendrick equations, 
well-posedness, COVID-19,  time-weighted spaces, epidemic model, initial value problem, bilinear estimates, well-posedness,  Sobolev spaces, population dynamics.}

\begin{abstract}
This work considers an extension of the SIR equations from epidemiology that includes a spatial variable.  This model, referred to as the Kermack-McKendrick equations (KM), is a pair of 
diffusive partial differential equations,   and methods developed for the Navier-Stokes equations and models of fluid dynamics are adapted to prove that KM is well-posed in the homogenous
Sobolev spaces with exponent $0 \le s <2$.
\end{abstract}

\date{September 18, 2021}

\maketitle

\markboth{On the 
well-posedness of a nonlinear diffusive  SIR epidemic model}
{C. Holliman and H. Prieto}

\section{Introduction}
\label{sec:1}
 \setcounter{equation}{0}

The outbreak of the COVID-19 pandemic galvanized the efforts to improve the predictive power of the mathematics modeling the spread of disease.  The most well-known of these models is given by the SIR equations. The present paper considers an extension of these equations that includes a
spatial variable. These  equations, which we call the Kermack-McKendrick equations (KM), 
change the SIR model from a set of ordinary differential equations (ODEs) into a coupled pair of
diffusive partial differential equations (PDEs).  We investigate  the well-posedness of
KM  using methods that were originally developed for fluid dynamics, in particular
for the Navier-Stokes equations (NS).

The SIR model was pioneered by W. Kermack and A. McKendrick in \cite{km} and is an example of
a  compartmental model.  In  their 
original formulation, the population 
is partitioned into the disjoint groups, or compartments, consisting of  the {\bf susceptible} $(S)$, 
{\bf infectious} $(I)$,    and 
{\bf recovered} $(R)$ individuals.  These quantities are strictly functions of time $t$, and the ODEs they give rise to, called the SIR-equations,  are given by
\begin{align}
\notag
S'(t)  &= -\beta SI, \\
I'(t) &= \beta SI -\mu I,\\ \notag
R'(t) &= \mu I. 
\end{align}
Here $\beta$ and $\mu$ are constants representing the transmission and recovery rates, respectively. For a detailed description of the basic assumptions of the model and the technical underpinnings that lead to its equations we refer to  \cite{br}, \cite{jo},  \cite{li}, \cite{mbh}.
For a review of the history of the SIR equations, the interested reader may consult \cite{bc}

Since its inception, most of the attention has concentrated on using the SIR model to understand disease transmission, and over the years important applications to public health have been found \cite{we}. A prime example of this is vaccination, where the transition rate between compartments is accelerated, since vaccinated individuals can be immediately placed in the $R$, (recovered) compartment. Kermack and McKendrick applied their model to the 1906 bubonic outbreak in Bombay \cite{bc}, but the model has also been employed in 
a wide variety of circumstances such as the evolution of the dengue outbreaks in Cuba (1997) and Venezuela (2000) \cite{ho},  the classical swine flu in the Netherlands  (1997-1998) \cite{ar}, and many others.

Considerable work has been devoted to improving the SIR model itself.
Notably, the original model has been expanded by the addition of more compartments. For instance, along with the traditional three compartments, some models also include incubation and latency periods ($E$) \cite{li}. More recently, some models have incorporated compartments to account for immunization and vaccination in populations \cite{fu}, \cite{xu}.

The descriptive and predictive power of the model has been  applied to the COVID-19 pandemic. The great interest generated by the topic and its timely nature are evidenced by the explosion of the literature on the subject. For a few applications of the SIR model in this context, we refer the readers to \cite{al}, \cite{co}, \cite{gou}.

One fundamental issue in using the SIR equations to  model a pandemic, however, is that it completely 
ignores spatial information.  As compared to a localized disease outbreak, the location  and concentration of  affected individuals in a global setting would most  certainly contribute to the time evolution of the model.
With this issue in mind, a generalized compartmental SIR model is constructed
by allowing  individuals to move via random walks.  For an investigation of random walks in 
this context, we refer the reader to \cite{s}, and for the foundational work on 
Brownian motion, which lies at the core of these diffusion processes, we refer the reader to \cite{eins}.
According to these models, an individual  moves randomly in a direction with the amplitude of the Brownian 
motion equalling  $D_S$ and $D_I$ for the Susceptible and Infected individuals, respectively.  After taking an expected value, these constants become the coefficients in the diffusion linear symbol, 
and we obtain the equations
\begin{equation}
\label{km-original}
\begin{split}
&S_t = D_S  \Delta S -\beta SI,\\
&I_t = D_I \Delta I + \beta SI -\mu I.
\end{split}
\end{equation}
It is worth noting that if there is no displacement of the individuals, then $D_S = D_I = 0$, and we 
obtain the original SIR equations.
A detailed description of how to obtain \eqref{km-original}  appears in \cite{cd}, and is reviewed in \cite{du}, \cite{f}.  Additionally, the traveling wave solutions of \eqref{km-original} have
been  investigated in  \cite{bz}, \cite{hi}, \cite{wz}, \cite{ww}.  

In this work, we will assume the spatial dimension in \eqref{km-original} to be one.
 Additionally, we make the change of variables 
$u(x,t) = S( x \sqrt{D_S/\beta}   ,t/\beta)$
and $v(x,t) = I( x \sqrt{D_I /\beta }, t/\beta)$ to obtain a slightly simpler non-dimensional version of \eqref{km-original}.  In this new version, the coefficient for the linear $v$ term will be $-\mu \beta$ which we 
will relabel as $-\mu$.  We refer to this new system of equations as the Kermack-McKendrick equations  (KM), and they are given by
\begin{align}
\notag
& u_t +uv - u_{xx}    = 0,  \\ \label{km}
&v_t - uv-v_{xx} - \mu  v  = 0,  \\ \notag
&u(x,0) = \varphi(x),  \quad v(x,0) = \psi (x), \quad t \in [0,T), x \in \rr.
\end{align}
We will approach the initial value  problem (ivp) 
posed by \eqref{km} using theory developed in the study of diffusion equations in fluid dynamics. To that end, we adapt the methods developed and applied in   \cite{be}, \cite{ho},  \cite{kf}.
Specifically, we investigate the {\it well-posedness} of KM in the sense of Hadamard. 
In order to  rigorously state what we mean by well-posedness, we must also
state precisely what spaces we are taking the initial data and solutions to be in.
We will take the initial data $\varphi, \psi$ to be in the homogeneous Sobolev spaces $\dot{H}^s$, and 
the solution $u$ to be in the intersection of $\dot{H}^s$ and the time-weighted $L^4$ spaces,
which we will call $X^s$.  The precise definitions of these spaces are provided  for the reader in 
\eqref{hs-def} and \eqref{xs-def}, respectively.

The idea of well-posedness was introduced  in \cite{ha}, 
and 
we say 
that the KM equations are  well-posed with initial data in 
$(\varphi, \psi) \in \dot{H}^s \times \dot{H}^s$
and solution in $ (u,v) \doteq W(\varphi, \psi) \in X^s \times X^s$,
 if the following three conditions hold:

\begin{itemize}
\item[I)] {\bf Existence.} For any initial data 
$(\varphi, \psi) \in \dot{H}^s \times \dot{H}^s$,
there exists a solution $(u,v) \in X^s \times X^s$ to KM.
\item[II)] {\bf Uniqueness.} The solution $(u,v) \doteq W(\varphi, \psi)$ is unique in  the space $X^s \times X^s$.
\item[III)]  {\bf Continuity/Stability.} The solution map 
$W : \dot{H}^s \times \dot{H}^s \to X^s \times X^s$ is continuous. 
\end{itemize}
With this definition in mind, we now state the primary result of this work.  



\begin{theorem}
\label{main}
Let $ 0 \le s <  2$, $0 < T   < \frac{1}{6\mu}$ and 
 $(\varphi, \psi) \in \dot{H}^s \times \dot{H}^s$
satisfying the smallness condition
\begin{align}
\| (\varphi, \psi) \|_{\dot{H}^s \times \dot{H}^s} \le \frac{1}{18 C_\ell C_b},
\end{align}
where the constants $C_\ell$ and $C_b$ are given in 
Propositions  \ref{linear-est}  and \ref{bilinear}, respectively.  Then 
the KM ivp \eqref{km} has a unique solution $(u,v) \in X^s\times X^s$.
Moreover, the solution map, 
$W: \dot{H}^{s} \times \dot{H}^s \to X^{s} \times X^s $, which takes $ (\varphi, \psi) 
\mapsto  (u,v)$, is Lipschitz
continuous. 
\end{theorem}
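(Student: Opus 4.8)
The plan is to recast the initial value problem \eqref{km} in its equivalent Duhamel (mild) form and then solve it by a contraction mapping argument in a closed ball of $X^s \times X^s$. Writing the propagator for the first equation as $e^{t\p_x^2}$ and for the second as $e^{t(\p_x^2+\mu)} = e^{\mu t}e^{t\p_x^2}$, the system \eqref{km} is equivalent to the fixed-point problem $(u,v) = \Phi(u,v)$, where
\begin{align*}
\Phi_1(u,v)(t) &= e^{t\p_x^2}\vphi - \int_0^t e^{(t-t')\p_x^2}(uv)(t')\,dt', \\
\Phi_2(u,v)(t) &= e^{t(\p_x^2+\mu)}\psi + \int_0^t e^{(t-t')(\p_x^2+\mu)}(uv)(t')\,dt'.
\end{align*}
The free evolutions are controlled by the linear estimate of Proposition \ref{linear-est}, which gives $\|e^{t\p_x^2}\vphi\|_{X^s} \le C_\ell\|\vphi\|_{\dot H^s}$ (and the analogous bound for the $\psi$-evolution, with the factor $e^{\mu t}$ kept of order one by the restriction $T < \frac{1}{6\mu}$), while the Duhamel integrals are controlled by the bilinear estimate of Proposition \ref{bilinear}, namely $\big\|\int_0^t e^{(t-t')\p_x^2}(fg)(t')\,dt'\big\|_{X^s} \le C_b\|f\|_{X^s}\|g\|_{X^s}$.

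With these two inputs the argument is a calibration of constants. Writing $\delta = \|(\vphi,\psi)\|_{\dot H^s\times\dot H^s}$ and choosing the radius $\rho = 3C_\ell\delta$, the smallness hypothesis $\delta \le \frac{1}{18C_\ell C_b}$ yields $\rho \le \frac{1}{6C_b}$, hence $2C_b\rho \le \tfrac13$. For the self-mapping property I would combine the two estimates to obtain, on the ball $\overline{B}(0,\rho)$,
\[
\|\Phi(u,v)\|_{X^s\times X^s} \le C_\ell\delta + 2C_b\rho^2 = \tfrac{\rho}{3} + (2C_b\rho)\rho \le \tfrac{\rho}{3} + \tfrac{\rho}{3} \le \rho,
\]
so $\Phi\big(\overline{B}(0,\rho)\big)\subset \overline{B}(0,\rho)$. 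For the contraction property I would use $u_1v_1 - u_2v_2 = u_1(v_1-v_2) + (u_1-u_2)v_2$ and Proposition \ref{bilinear} to get
\[
\|\Phi(u_1,v_1) - \Phi(u_2,v_2)\|_{X^s\times X^s} \le 2C_b\rho\,\|(u_1,v_1)-(u_2,v_2)\|_{X^s\times X^s} \le \tfrac13\,\|(u_1,v_1)-(u_2,v_2)\|_{X^s\times X^s}.
\]
The Banach fixed point theorem then furnishes a unique $(u,v)\in\overline{B}(0,\rho)\subset X^s\times X^s$ solving \eqref{km}, giving Existence and Uniqueness at once.

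For the Lipschitz continuity of $W$, I would compare the fixed points $(u,v)$ and $(\tilde u,\tilde v)$ attached to data $(\vphi,\psi)$ and $(\tilde\vphi,\tilde\psi)$. Subtracting the two fixed-point identities and applying the same linear and bilinear estimates produces
\[
\|(u,v)-(\tilde u,\tilde v)\|_{X^s\times X^s} \le C_\ell\,\|(\vphi,\psi)-(\tilde\vphi,\tilde\psi)\|_{\dot H^s\times\dot H^s} + 2C_b\rho\,\|(u,v)-(\tilde u,\tilde v)\|_{X^s\times X^s},
\]
and, since $2C_b\rho\le\tfrac13$, absorbing the last term to the left yields the Lipschitz bound $\|(u,v)-(\tilde u,\tilde v)\|_{X^s\times X^s}\le \tfrac{3C_\ell}{2}\,\|(\vphi,\psi)-(\tilde\vphi,\tilde\psi)\|_{\dot H^s\times\dot H^s}$, which is exactly Continuity/Stability.

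The main obstacle I anticipate is not the abstract fixed-point scheme, which is routine once the two propositions are available, but the uniform control of the reaction term $\mu v$: because it is \emph{linear} rather than bilinear, it is not directly absorbed by Proposition \ref{bilinear}, and it is precisely the time restriction $T < \frac{1}{6\mu}$ that keeps its contribution — entering through the factor $e^{\mu t}$ in the $v$-propagator, equivalently a $\mu T$-sized Duhamel correction — small enough to be folded into the constants $C_\ell$ and $C_b$ above without violating the self-mapping and contraction inequalities. Care is also needed to ensure the bilinear estimate applies across the full range $0\le s<2$, but that difficulty resides entirely within Proposition \ref{bilinear} and may be taken as given here.
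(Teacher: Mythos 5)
Your proposal is correct and follows the same overall strategy as the paper: Duhamel reformulation, the linear estimate of Proposition \ref{linear-est} for the free evolution, the bilinear estimate of Proposition \ref{bilinear} for the quadratic Duhamel term, a Banach fixed-point argument on a ball, and an absorption argument for Lipschitz continuity of $W$. The one place you genuinely diverge is the treatment of the linear reaction term $\mu v$. You fold it into the propagator, writing the $v$-semigroup as $e^{t(\partial_x^2+\mu)}=e^{\mu t}S(t)$, so that the only price is a factor $e^{\mu T}<e^{1/6}$ multiplying the constants; the paper instead keeps $\mu v$ inside the Duhamel integral and estimates $\mu\big\|\int_0^t S(t-t')v\,dt'\big\|_{X^s}\le 2\mu T\|v\|_{X^s}$ directly (via the mean value theorem in $t'$ together with the $L^p$--$L^q$ heat estimate), which is exactly where the hypothesis $T<\frac{1}{6\mu}$ enters as $2\mu T<\frac13$. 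Your route is arguably cleaner (no extra linear Duhamel term to estimate), but it does require you to verify, rather than merely assert, that Propositions \ref{linear-est} and \ref{bilinear} survive multiplication by $e^{\mu T}$ with the stated numerical margins; they do, since the self-mapping bound becomes $\frac{2e^{1/6}}{3}\rho<\rho$ and the contraction constant becomes $\frac{e^{1/6}}{3}<1$, so the same smallness condition $\|(\varphi,\psi)\|_{\dot H^s\times\dot H^s}\le\frac{1}{18C_\ell C_b}$ suffices, and this short computation should be written out. Your choice of radius $\rho=3C_\ell\delta$ (data-dependent) versus the paper's fixed $\rho=\frac{1}{3C_b}$ is immaterial; both calibrations close.
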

The proof of Theorem \ref{main} revolves around the techniques developed in \cite{kf} to prove the well-posedness of
the Navier-Stokes  (NS) equations
\begin{equation}
\begin{split}
& u_t + (u \cdot \nabla) u + \mu \Delta u + \nabla p = 0, \\
&\text{div} \; u = 0,
\end{split}
\end{equation}
where $p$ is the pressure of the fluid, and $\mu$ its viscosity.   Here, the  
Brownian motion amplitudes  in \eqref{km-original}, 
$D_S$ and $D_I$,
act in a similar  manner as the viscosity  coefficient $\mu$ in NS. 
The strategy that was implemented for NS was built on the foundations developed in \cite{g2} and
consisted in showing that
the 
associated integral operator  had a fixed point in a suitable space.
These ideas have also been 
used in other hydrodynamic equations as such  the viscous Burgers (vB) equation  
\begin{align}
&u_t + u u_x + \mu u_{xx} = 0,
\end{align}
which was examined in \cite{be}, and the $k$-Burgers equation
\begin{align}
u_t + u^{k} u_x + \mu u_{xx} = 0, \quad\quad k = {1, 2,  3, \cdots}.
\end{align}
which was investigated in \cite{ho}.

\textbf{Outline of the paper.}
The present paper is organized as follows.  In section 2, we provide a number of preliminaries, including the definitions of our function spaces as well as linear estimates for the diffusion operator.  
In section 3, we first reformulate KM as a fixed point problem and then prove that this associated
integral operator is a contraction mapping.  In section 4, we provide a proof of the bilinear estimate
that was needed in order to establish the contraction in section 3.

\section{Preliminaries and Linear Estimates}
\label{sec:1}
 \setcounter{equation}{0}

In this section, we set up our notation and collect the basic estimates that will be used in the course of proving the main result.

\textbf{Notation.}
We say $A \lesssim B$ if there exists a constant $C >0$ such that $A \le CB$.
If $A \lesssim B$ and $B \lesssim A$ we write $A \simeq B$.

\textbf{Function Spaces.}
The spaces that we will use are the combination of 
homogeneous Sobolev spaces and time-weighted $L^p$ spaces, and we briefly provide a definition of these spaces and their norms.

We begin with the homogenous Sobolev space,  $\dot{H}^s$, which is a subspace of the Tempered Distributions where the following norm is finite. 
We 
take the 
the Riesz potential 
$D_x = (-\partial^2_x)^{1/2}$,
or equivalently,  the Fourier multiplier given by $\widehat{D_{x}f} (\xi) 
=  | \xi | \widehat{f} (\xi)$, and then define the $\dot{H}^s$-norm as 
\begin{align}
\label{hs-def}
\| u \|_{\dot{H}^s}
\doteq
\| D_x^s u \|_{L^2}
=
\Big(
\int_\rr |\xi|^{2s} | \widehat{u}(\xi)|^2 d\xi 
\Big)^{1/2}.
\end{align}

Next, we define our time-weighted $L^p$ spaces.
For any fixed $T$, $\alpha \ge 0$, we define 
the subspace $C^{\alpha}((0,T)$;$L^p) \subset C((0,T)$;$L^p)$ by
\begin{align*}
    C^\alpha_0 ((0,T);L^p )= 
    \Big\{ u\in C((0,T);L^p): \sup_{t \in (0,T)} t^\alpha \|u\|_{L^p} < \infty 
    \;\;  \text{and} \;\;  \lim_{t\to 0^+} t^\alpha \|u\|_{L^p} = 0
    \Big\}.
\end{align*}
For given $s,\alpha,p$ we can now define 
$X^{s,\alpha,p} = \dot{H}^s \cap     C^\alpha_0 ((0,T);L^p )$; however, 
in our particular  case, there is a relationship between $s$ and $\alpha$ that arises
in our proof of the Bilinear Estimate needed for Theorem \ref{main}.  We thus will restrict our attention to
\begin{align*}
\alpha \doteq \frac{1}{2} - \frac{s}{4}.
\end{align*}
Additionally, the only $p$ that we will utilize is $p = 4$ as it also arises
in the  Bilinear Estimate after applying  the generalized H\"{o}lder's inequality in $L^2$.  
In view of these choices, we define $X^s \doteq X^{s,\frac{1}{2} - \frac{s}{4}, 4}$, 
and take the norm to be
\begin{align}
\label{xs-def}
\|u\|_{X^{s}}
\doteq
 \sup_{t \in (0,T)} \|u\|_{\dot{H}^s} + \sup_{t \in (0,T)} t^\alpha \|u\|_{L^4}, \quad  
 \text{where} \quad \alpha = 
 \frac{1}{2} - \frac{s}{4}.
\end{align}
Finally, since we are working with two simultaneous equations, we define our norms on the product
spaces in the usual fashion.  For pairs, we have
\begin{align*}
\| (\varphi, \psi) \|_{\dot{H}^s \times \dot{H}^s}  &\doteq \|\varphi\|_{\dot{H}^s} +
\| \psi \|_{\dot{H}^s}, \\
\| (u,v) \|_{X^s \times X^s} &\doteq  \| u\|_{X^s}+ \|v\|_{X^s}.
\end{align*}

\textbf{The Diffusion Operator and Linear Estimates.}  
We begin by considering the diffusion equation 
\begin{equation} \label{heat}
\begin{split}
&w_t + w_{xx} = 0, \\
&w(0,x) = w_0(x).
\end{split}
\end{equation}
We take the solution operator $S$ of  \eqref{heat} as the Fourier multiplier
given by
\begin{align*}
\widehat{S(t) \varphi}(\xi) = e^{-t \xi^2} \widehat{\varphi}(\xi).
\end{align*}
Before we proceed with our estimates regarding the operator $S$, we  state the well-known 
Hardy-Littlewood-Sobolev inequality for the convenience of the reader.  For a  proof of this estimate, we refer
the reader to   \cite{lb}, \S 4.3.
\begin{lemma}[Hardy-Littlewood-Sobolev]
\label{hls}
 Suppose that  $0 < \alpha < 1$, $ 1 < \alpha < p < 1/\alpha $ and $q$ satisfies 
 \begin{align*}
     \frac{1}{q} 
     =
     \frac{1}{p} - \alpha.
 \end{align*}
If $f \in L^p$, then $D^{-\alpha}_x f \in L^q$ (where $D_x = (-\partial^{2}_x)^{1/2}$ is the Riesz potential) and there exists a constant $C= C (\alpha, p)$ such that
\begin{align}
 \| D^{-\alpha}_x f \|_{L^q}
\leq
    C \| f \|_{L^p}.
\end{align}
\end{lemma}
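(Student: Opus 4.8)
The plan is to realize the operator $D_x^{-\alpha}$ as a fractional integral and then to prove its $L^p\to L^q$ boundedness through Hedberg's pointwise bound combined with the Hardy–Littlewood maximal inequality. On the Fourier side one has $\widehat{D_x^{-\alpha}f}(\xi)=|\xi|^{-\alpha}\widehat{f}(\xi)$, and since $0<\alpha<1$ the multiplier $|\xi|^{-\alpha}$ is, up to a dimensional constant $c_\alpha$, the Fourier transform of $|x|^{\alpha-1}$ on $\rr$. Hence
\begin{align*}
D_x^{-\alpha}f(x)=c_\alpha\int_\rr |x-y|^{\alpha-1}f(y)\,dy,
\end{align*}
and it suffices to bound this Riesz potential from $L^p$ into $L^q$.

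First I would split the kernel at a scale $R>0$ to be chosen later, writing the integral as the contribution from $\{|x-y|<R\}$ plus that from $\{|x-y|\ge R\}$. For the near piece I would decompose $\{|x-y|<R\}$ into dyadic annuli $2^{-j-1}R\le|x-y|<2^{-j}R$; on each annulus $|x-y|^{\alpha-1}\lesssim(2^{-j}R)^{\alpha-1}$, while the integral of $|f|$ over the enclosed interval is controlled by $(2^{-j}R)\,Mf(x)$, where $M$ denotes the Hardy–Littlewood maximal operator. Summing the geometric series $\sum_j(2^{-j}R)^{\alpha}$, which converges because $\alpha>0$, yields the near bound $\lesssim R^{\alpha}Mf(x)$. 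For the far piece I would apply Hölder's inequality with exponents $p,p'$:
\begin{align*}
\int_{|x-y|\ge R}|x-y|^{\alpha-1}|f(y)|\,dy\le\|f\|_{L^p}\Big(\int_{|x-y|\ge R}|x-y|^{(\alpha-1)p'}\,dy\Big)^{1/p'}.
\end{align*}
The tail integral converges exactly when $(\alpha-1)p'<-1$, which is equivalent to the hypothesis $p<1/\alpha$, and it evaluates to $\lesssim R^{\alpha-1/p}$. Combining the two pieces gives the pointwise Hedberg inequality $|D_x^{-\alpha}f(x)|\lesssim R^{\alpha}Mf(x)+R^{\alpha-1/p}\|f\|_{L^p}$.

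I would then optimize in $R$ by balancing the two terms, i.e. taking $R=(\|f\|_{L^p}/Mf(x))^{p}$, which produces
\begin{align*}
|D_x^{-\alpha}f(x)|\lesssim\big(Mf(x)\big)^{1-\alpha p}\,\|f\|_{L^p}^{\alpha p}=\big(Mf(x)\big)^{p/q}\,\|f\|_{L^p}^{\alpha p},
\end{align*}
where I used the defining relation $1/q=1/p-\alpha$, whence $p/q=1-\alpha p$. Raising to the $q$-th power, integrating in $x$, and extracting the constant factor give $\|D_x^{-\alpha}f\|_{L^q}\lesssim\|Mf\|_{L^p}^{p/q}\|f\|_{L^p}^{\alpha p}$. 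Finally, since $p>1$, the Hardy–Littlewood maximal theorem yields $\|Mf\|_{L^p}\lesssim\|f\|_{L^p}$, and because the exponents add to one, $p/q+\alpha p=1$, I obtain $\|D_x^{-\alpha}f\|_{L^q}\lesssim\|f\|_{L^p}$, which is the asserted estimate.

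The main obstacle is the reliance on the Hardy–Littlewood maximal inequality, whose $L^p$ boundedness requires $p>1$; this is precisely why the hypotheses exclude $p=1$, where only a weak-type $(1,q)$ bound survives. Establishing the maximal theorem itself, via a Vitali-type covering lemma and Marcinkiewicz interpolation, is the one genuinely nontrivial ingredient, while the remainder of the argument is careful bookkeeping of scales. If one preferred to avoid the maximal function altogether, an alternative is to prove the weak-type bound for the fractional integral directly and interpolate between two endpoints; however, Hedberg's route is shorter and makes the roles of the hypotheses $0<\alpha<1$ and $p<1/\alpha$ most transparent.
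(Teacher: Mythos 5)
Your argument is correct and complete: the identification of $D_x^{-\alpha}$ with convolution against $c_\alpha|x|^{\alpha-1}$ is valid for $0<\alpha<1$ in one dimension, the dyadic summation of the near piece genuinely uses $\alpha>0$, the convergence of the tail integral under H\"older is exactly equivalent to $p<1/\alpha$, and the exponent bookkeeping $p/q=1-\alpha p$ and $p/q+\alpha p=1$ checks out, with the maximal theorem supplying the final step (this is where $p>1$ is used, as you note; the only degenerate cases in the choice of $R$ are $f\equiv 0$ or the null set where $Mf(x)=\infty$). Be aware, however, that the paper does not prove this lemma at all: it is quoted as a known result with an explicit pointer to Lieb--Loss, \S 4.3, whose proof is quite different in character --- it treats the symmetric trilinear form $\iint f(x)|x-y|^{-\lambda}g(y)\,dx\,dy$, decomposes $f$ and $g$ by the layer-cake representation into level sets, and estimates the resulting interactions, a route designed to produce explicit (and, in the conformally invariant cases, sharp) constants without any maximal function. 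Your Hedberg-style pointwise bound $|D_x^{-\alpha}f(x)|\lesssim R^{\alpha}Mf(x)+R^{\alpha-1/p}\|f\|_{L^p}$ followed by optimization in $R$ is shorter and makes the role of each hypothesis transparent, at the price of importing the Hardy--Littlewood maximal inequality and giving up any control of the constant; for the purposes of this paper, where only the existence of $C(\alpha,p)$ matters, either route suffices. One small point: the hypothesis as printed, ``$1<\alpha<p<1/\alpha$,'' is evidently a typo for $1<p<1/\alpha$ (it is inconsistent with $0<\alpha<1$), and you have implicitly read it the correct way.
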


We now consider three estimates for $S$ that will be used throughout this work.  The first 
is an $L^p$-$L^q$ estimate that can be found  in \cite{gi}.
\begin{lemma}
\label{giga}
Let $S(t)$ be the solution operator for the heat equation \eqref{heat} with initial data $\varphi$  and $1 \le q \le p \le \infty$.  Then for $t > 0$ we have the estimate
\begin{align}
    \| S(t) \varphi \|_{L^p}
    \le
    (4 \pi t)^{-\frac{1}{2}(\frac{1}{q} - \frac{1}{p})}
    \|\varphi \|_{L^q},
\end{align}
\end{lemma}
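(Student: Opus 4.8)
The plan is to realize $S(t)$ as convolution against a Gaussian and then reduce the entire estimate to Young's convolution inequality together with a single Gaussian integral. First I would invert the Fourier multiplier: since $\widehat{S(t)\varphi}(\xi) = e^{-t\xi^2}\widehat{\varphi}(\xi)$, we have $S(t)\varphi = G_t * \varphi$, where $G_t$ is the inverse transform of $e^{-t\xi^2}$, namely the heat kernel
\[
G_t(x) = (4\pi t)^{-1/2}\, e^{-x^2/(4t)}.
\]
This identification is the one step sensitive to the normalization of the Fourier transform, and it is precisely what pins down the explicit constant $4\pi t$ appearing in the statement. I would note in passing that $\|G_t\|_{L^1} = 1$.

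Next I would invoke Young's inequality $\|G_t * \varphi\|_{L^p} \le \|G_t\|_{L^r}\|\varphi\|_{L^q}$, where the auxiliary exponent $r$ is forced by the Young relation $1 + \tfrac1p = \tfrac1q + \tfrac1r$, equivalently $\tfrac1r = 1 - (\tfrac1q - \tfrac1p)$. The hypothesis $1 \le q \le p \le \infty$ guarantees $0 \le \tfrac1q - \tfrac1p \le 1$, so $r \in [1,\infty]$ and the inequality applies; I would separately verify the two extreme configurations, $q=p$ (which gives $r=1$ and reduces to the $L^1$ contraction $\|G_t\|_{L^1}=1$) and $(q,p)=(1,\infty)$ (which gives $r=\infty$ and $\|G_t\|_{L^\infty} = (4\pi t)^{-1/2}$), to confirm they match the target.

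The remaining computation is the $L^r$ norm of a Gaussian, which is routine: scaling out the factor $(4\pi t)^{-1/2}$ and evaluating the Gaussian integral yields
\[
\|G_t\|_{L^r}^r = (4\pi t)^{-r/2}\int_\rr e^{-r x^2/(4t)}\,dx = (4\pi t)^{-r/2}\Big(\tfrac{4\pi t}{r}\Big)^{1/2},
\]
so that, using $\tfrac1r - 1 = -(\tfrac1q - \tfrac1p)$, one finds $\|G_t\|_{L^r} = (4\pi t)^{-\frac12(\frac1q - \frac1p)}\, r^{-1/(2r)}$. Combining this with Young's inequality produces exactly the claimed bound, save for the stray factor $r^{-1/(2r)}$.

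The only genuine obstacle is to discard that last factor and recover the clean constant. This is handled by the elementary observation that $r^{-1/(2r)} \le 1$ for every $r \in [1,\infty]$: taking logarithms, the inequality is equivalent to $-\tfrac{\ln r}{2r}\le 0$, which holds precisely because $r \ge 1$, with equality at $r=1$ and the limiting value $1$ as $r\to\infty$. Absorbing $r^{-1/(2r)}\le 1$ into the estimate then gives $\|S(t)\varphi\|_{L^p} \le (4\pi t)^{-\frac12(\frac1q - \frac1p)}\|\varphi\|_{L^q}$, completing the argument. I expect no analytic difficulty beyond bookkeeping the exponent relation and confirming this monotonicity fact, which is what allows the use of the non-sharp (constant-one) form of Young's inequality rather than its sharp Beckner--Brascamp--Lieb version.
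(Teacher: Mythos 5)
Your argument is correct. The paper itself offers no proof of this lemma --- it simply cites the reference \cite{gi} --- so there is nothing internal to compare against; your write-up supplies the standard and essentially canonical argument: identify $S(t)$ as convolution with the Gaussian kernel $G_t(x)=(4\pi t)^{-1/2}e^{-x^2/(4t)}$ (the step that fixes the constant $4\pi t$ under the paper's Fourier normalization), apply Young's inequality with $1+\tfrac1p=\tfrac1q+\tfrac1r$, compute $\|G_t\|_{L^r}=(4\pi t)^{-\frac12(\frac1q-\frac1p)}r^{-1/(2r)}$, and discard the harmless factor $r^{-1/(2r)}\le 1$. The exponent bookkeeping, the verification that $r\in[1,\infty]$ under the hypothesis $1\le q\le p\le\infty$, and the endpoint checks are all in order, so this would serve as a complete self-contained proof of the lemma.
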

%
%
%
\begin{lemma} 
\label{25}
For $s \ge 0$, there exists a positive constant $c_s$ such that
\begin{align*}
    \big\| D^{s} _x S(t) f \big\| _{L^2} \leq c_s t^{-s/2} \big\|f \big\|_{L^2}.
\end{align*}
\end{lemma}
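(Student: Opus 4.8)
The plan is to reduce everything to the Fourier side, where both $D_x^s$ and $S(t)$ are diagonalized, and then to extract the $t^{-s/2}$ scaling from a single elementary rescaling of the heat symbol. First I would observe that $D_x^s S(t)$ acts as the Fourier multiplier with symbol $|\xi|^s e^{-t\xi^2}$, since $\widehat{D_x^s f}(\xi) = |\xi|^s \widehat{f}(\xi)$ and $\widehat{S(t)f}(\xi) = e^{-t\xi^2}\widehat{f}(\xi)$ by the definitions of the Riesz potential and of the solution operator. By Plancherel's theorem,
$$\big\| D_x^s S(t) f \big\|_{L^2}^2 = \int_\rr |\xi|^{2s} e^{-2t\xi^2}\, |\widehat{f}(\xi)|^2 \, d\xi.$$

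Next I would pull the multiplier out of the integral in supremum norm and apply Plancherel once more on the right, obtaining
$$\int_\rr |\xi|^{2s} e^{-2t\xi^2}\, |\widehat{f}(\xi)|^2\, d\xi \le \Big( \sup_{\xi \in \rr} |\xi|^{s} e^{-t\xi^2} \Big)^2 \int_\rr |\widehat{f}(\xi)|^2\, d\xi = \Big( \sup_{\xi \in \rr} |\xi|^{s} e^{-t\xi^2} \Big)^2 \|f\|_{L^2}^2.$$
This isolates the entire $t$-dependence into the supremum of the symbol.

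The key step is to evaluate that supremum. Substituting $\eta = \sqrt{t}\,\xi$ gives the self-similar identity $|\xi|^s e^{-t\xi^2} = t^{-s/2} |\eta|^s e^{-\eta^2}$, so that $\sup_{\xi} |\xi|^s e^{-t\xi^2} = c_s\, t^{-s/2}$, where $c_s \doteq \sup_{\eta \in \rr} |\eta|^s e^{-\eta^2}$ is a $t$-independent constant. A routine one-variable computation, differentiating $|\eta|^s e^{-\eta^2}$ and locating its critical point at $\eta^2 = s/2$, yields the explicit value $c_s = (s/(2e))^{s/2}$, which is finite and positive for every $s \ge 0$ (with $c_0 = 1$). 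Inserting this into the previous display and taking square roots produces exactly the claimed bound.

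I do not anticipate a genuine obstacle: the estimate follows directly from Plancherel's theorem together with the parabolic scaling of the heat symbol, and requires neither Lemma \ref{hls} nor Lemma \ref{giga}. The only point meriting a small amount of care is verifying that the supremum defining $c_s$ is finite and attained for all $s \ge 0$, which the explicit critical-point calculation settles.
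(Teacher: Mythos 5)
Your argument is correct and follows essentially the same route as the paper: Plancherel to pass to the Fourier side, an $L^\infty$ bound on the symbol $|\xi|^s e^{-t\xi^2}$, and a critical-point computation giving $c_s = (s/(2e))^{s/2}$ (your parabolic rescaling $\eta = \sqrt{t}\,\xi$ is just a cleaner way of organizing the same maximization, and your constant is in fact the sharp one, whereas the paper records the slightly larger $(s/2)^{s/2}$).
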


\begin{proof}
By Parseval's identity and H\"{o}lder's inequality, we have
\begin{align}
\label{l-25-1}
     \big\| D^{s} _x S(t) f \big\| _{L^2} = 
     \frac{1}{2 \pi} \big\| |\xi|^{s} e^{-\xi^2t } \widehat{f} (\xi) \big\|_{L^2}
     \le      \frac{1}{2 \pi} \big\| |\xi|^{s} e^{-\xi^2t} \big\|_{L^{\infty}} \big\| \widehat{f} \big\|_{L^2}.
\end{align}
We see that the $L^\infty$-norm of $ |\xi|^{r} e^{-\xi^2t}$ is now easily bounded by 
\begin{align}
\label{parseval_2}
    \big\| |\xi|^{s} e^{-\xi^2t} \big\|_{L^{\infty}} \leq
    \Big(\frac{s}{2e^t} \Big)^{\frac{r}{2}} t^{-\frac{s}{2}}.
\end{align}
We can then continue bounding  \eqref{l-25-1} by using \eqref{parseval_2} to get
\begin{align*}
     \big\| D^{s} _x S(t) f \big\| _{L^2} 
     \leq
     \frac{1}{2 \pi} \Big( \frac{s}{2e^t} \Big)^{\frac{s}{2}} t^{-\frac{s}{2} } \big\| \widehat{f} \big\|_{L^2} 
     =
     \Big( \frac{s}{2e^t} \Big)^{\frac{s}{2}} t^{-\frac{s}{2}} \big\| f \big\|_{L^2}. 
\end{align*}
We see then that taking $c_s = ( \frac{s}{2} )^{\frac{s}{2}} $ we obtain our desired estimate.
\end{proof}
%
%
\begin{proposition}[Linear Estimate] 
\label{linear-est}
For $0 \le s  < 2$,
the mapping $\phi \mapsto S(t) \phi$, where $S$ is the solution operator to the diffusion equation,
continuously maps $\dot{H}^s \to X^s$, and we have the inequality
\begin{align}
\label{linear-ineq}
\|S(t) \varphi \|_{X^s} \le C_\ell  \| \varphi \|_{\dot{H}^s}.
\end{align}
where the constant $C_\ell$  depends on $s$ and $T$.
\end{proposition}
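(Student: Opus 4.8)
The plan is to estimate separately the two quantities that make up the $X^s$-norm in \eqref{xs-def}, namely $\sup_{t\in(0,T)}\|S(t)\varphi\|_{\dot{H}^s}$ and $\sup_{t\in(0,T)}t^\alpha\|S(t)\varphi\|_{L^4}$ with $\alpha=\tfrac12-\tfrac s4$, and then add the two bounds. Since $S(t)$ is the Fourier multiplier with symbol $e^{-t\xi^2}$, both pieces are controlled by passing to the frequency side and exploiting the pointwise inequality $e^{-t\xi^2}\le 1$ together with the smoothing supplied by Lemmas \ref{giga} and \ref{25}.

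The first piece is immediate. By the definition \eqref{hs-def} and Plancherel,
\[
\|S(t)\varphi\|_{\dot{H}^s}^2=\int_\rr|\xi|^{2s}e^{-2t\xi^2}|\widehat{\varphi}(\xi)|^2\,d\xi\le\int_\rr|\xi|^{2s}|\widehat{\varphi}(\xi)|^2\,d\xi=\|\varphi\|_{\dot{H}^s}^2,
\]
uniformly in $t>0$, so that $\sup_{t\in(0,T)}\|S(t)\varphi\|_{\dot{H}^s}\le\|\varphi\|_{\dot{H}^s}$.

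The substance of the argument is the time-weighted $L^4$ piece. Here I would first realize the $L^4$-norm through the one-dimensional Sobolev embedding $\dot{H}^{1/4}(\rr)\hookrightarrow L^4(\rr)$, which is precisely Lemma \ref{hls} applied with $\alpha=\tfrac14$, $p=2$, $q=4$: writing $S(t)\varphi=D_x^{-1/4}\big(D_x^{1/4}S(t)\varphi\big)$ yields $\|S(t)\varphi\|_{L^4}\le C\|D_x^{1/4}S(t)\varphi\|_{L^2}$. It then remains to bound $\|D_x^{1/4}S(t)\varphi\|_{L^2}$ by a power of $t$ times $\|\varphi\|_{\dot{H}^s}$. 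Factoring the symbol as $|\xi|^{1/4}e^{-t\xi^2}=\big(|\xi|^{1/4-s}e^{-t\xi^2}\big)|\xi|^{s}$, for $s\le\tfrac14$ the smoothing estimate of Lemma \ref{25} (or, equivalently, the $L^q$--$L^p$ estimate of Lemma \ref{giga} after a semigroup splitting $S(t)=S(t/2)S(t/2)$) legitimately produces a factor $t^{-(1/4-s)/2}$ against $\|D_x^s\varphi\|_{L^2}=\|\varphi\|_{\dot{H}^s}$. Combining with the weight $t^\alpha$, the net power of $t$ is $\alpha-\tfrac12\big(\tfrac14-s\big)=\tfrac38+\tfrac s4\ge 0$; since this exponent is nonnegative, the supremum over the bounded interval $(0,T)$ is finite and is controlled by a constant of the form $C_\ell=C(s)\,T^{3/8+s/4}$, depending only on $s$ and $T$ as claimed.

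The main obstacle --- and the step deserving the most care --- is exactly this regularity/scale conversion when $s>\tfrac14$. In that regime the exponent $\tfrac14-s$ is negative, so the multiplier $|\xi|^{1/4-s}e^{-t\xi^2}$ is unbounded as $\xi\to0$ and the crude $L^\infty$-multiplier bound fails; the hypothesis $\sigma\ge0$ of Lemma \ref{25} is likewise violated. Equivalently, the heat smoothing does nothing to the low frequencies, which is precisely where the passage from the homogeneous scale $\dot{H}^s$ to the $L^4$-critical scale $\dot{H}^{1/4}$ becomes delicate (and where one must be careful about the realization of $\dot{H}^s$). Resolving this requires invoking the genuine mapping property of the Riesz potential in Lemma \ref{hls} rather than an $L^\infty$ estimate on the symbol, likely after a Littlewood--Paley-type splitting into low and high frequencies, and making essential use of the finiteness of the time horizon $T$ --- which is exactly why $C_\ell$ is allowed to depend on $T$. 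I expect the bookkeeping of exponents, ensuring that each application of Lemmas \ref{hls}, \ref{giga}, and \ref{25} lands in an admissible range and that the accumulated power of $t$ matches $\alpha=\tfrac12-\tfrac s4$ up to a nonnegative remainder, to be where the real work lies.
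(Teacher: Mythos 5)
Your overall strategy coincides with the paper's: bound the $\dot H^s$ component via Plancherel and $e^{-t\xi^2}\le 1$, and bound the weighted $L^4$ component via the embedding $\dot H^{1/4}\hookrightarrow L^4$ (Lemma \ref{hls} with $\alpha=1/4$, $p=2$, $q=4$) followed by the heat smoothing of Lemma \ref{25}. For $0\le s\le 1/4$ your bookkeeping is correct and in fact more consistent than the paper's: you peel off exactly $D_x^s$, apply Lemma \ref{25} with exponent $1/4-s\ge 0$, and land on $t^{-(1/4-s)/2}\|\varphi\|_{\dot H^s}$ with net power $t^{3/8+s/4}$ after inserting the weight $t^{\alpha}$. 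The paper instead factors $D_x^{1/4}=D_x^{1/4-s/2}D_x^{s/2}$, which enlarges the admissible range to $s\le 1/2$ but produces $\|D_x^{s/2}\varphi\|_{L^2}=\|\varphi\|_{\dot H^{s/2}}$, which it then records as $\|\varphi\|_{\dot H^s}$ --- a mismatch your version avoids.

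The genuine gap is the case $s>1/4$, which you defer to a Littlewood--Paley argument. The obstruction you identified --- the symbol $|\xi|^{1/4-s}e^{-t\xi^2}$ blowing up at low frequency --- is real, but it cannot be removed by a frequency splitting: at a fixed $t\in(0,T)$, taking $\widehat\varphi=\mathbf 1_{[\varepsilon,2\varepsilon]}$ gives $\|\varphi\|_{\dot H^s}\simeq \varepsilon^{s+1/2}$ while $\|S(t)\varphi\|_{L^4}\simeq\varepsilon^{3/4}$ once $t\varepsilon^2\ll 1$, so the ratio behaves like $\varepsilon^{1/4-s}$ and is unbounded as $\varepsilon\to 0^+$ whenever $s>1/4$. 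No positive power of $t$ and no decomposition repairs this; the $L^4$ half of the $X^s$-norm simply is not controlled by the homogeneous $\dot H^s$-norm alone in that regime. So your argument is complete only for $0\le s\le 1/4$ --- but the paper's own proof fares no better: it explicitly notes that its application of Lemma \ref{25} forces $0\le s\le 1/2$, even though the proposition is claimed for $0\le s<2$, and it carries the norm mismatch described above. Your instinct to flag the low-frequency difficulty rather than paper over it was the right one; the resolution would have to come from modifying the statement (e.g., working with the inhomogeneous Sobolev norm or adding an $L^2$ component to the data space), not from a cleverer splitting.
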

\begin{proof}
From the definition of the $X^s$-norm, we have
\begin{align*}
\| S(t) \varphi \|_{X^s}
=
\sup_{t \in (0,T)} \|S(t)\varphi \|_{\dot{H}^s} + \sup_{t \in (0,T)} t^\alpha \|S(t)\varphi\|_{L^4}.
\end{align*}
The first term is handled with the usual methods using Plancherel's Theorem and the definition 
of the diffusion solution operator $S$.  Indeed, we have
\begin{align*}
\|S(t) \varphi \|_{\dot{H}^s}^2
&=
\int_\rr
|\xi|^{2s} e^{-t \xi^2} | \widehat{\varphi} (\xi)|^2 d\xi
\le
\int_\rr|\xi|^{2s} | \widehat{\varphi} (\xi)|^2 d\xi
=
\|\varphi\|_{\dot{H}^s}^2.
\end{align*}
Thus, the first term has the upper bound 
\begin{align*}
\sup_{t \in (0,T)} \|S(t)\varphi\|_{\dot{H}^s} 
\le
\sup_{t \in (0,T)} \|\varphi\|_{\dot{H}^s}.
\end{align*}
For the second term, we begin by restricting our attention to the quantity under the $L^4$-norm.  
Applyling the Riesz Derivative and its inverse
with exponent $1/4$ gives us
\begin{align*}
\|S(t) \varphi \|_{L^4}  
=
\| D^{-1/4}_x D^{1/4}_x  S(t) \varphi   \| _{L^4} .
\end{align*}
Now we apply the Hardy-Littlewood-Sobolev Lemma, or  \Cref{hls}, with $p = 2,  q =4$ and $\alpha = 1/4$
to get
\begin{align*}
\| D^{-1/4}_x D^{1/4}_x  S(t) \varphi   \| _{L^4} 
\lesssim
\| D^{1/4}_x  S(t) \varphi \big \| _{L^2} .
\end{align*}
Next, we again apply the Riesz Derivative along with its inverse, both with exponent $s/2$,
after which
we can apply Lemma \ref{25} with the corresponding $\alpha = 1/4 - s/2$.  Note that this Lemma requires
$\alpha  \ge 0$, which corresponds to our hypothesis $0 \le s \le  1/2$.
We thus get
\begin{align*}
\| D^{1/4}_x  S(t) \varphi \big \| _{L^2}
=
\| D^{1/4 - s/2}_x   D^{s/2}_x S(t) \varphi \big \| _{L^2}
\lesssim
t^{-1/8 + s/4} \|  D^{s/2}_x\varphi \|_{L^2}
=
t^{ - 1/8 + s/4} \| \varphi \|_{\dot{H}^s}. 
\end{align*}
Using this upper bound for the $L^4$-norm of $S(t) \phi$, we get the time-weighted norm to be
\begin{align*}
\sup_{t \in (0,T)} t^\alpha \|S(t)\varphi\|_{L^4}
\lesssim 
\sup_{t \in (0,T)} t^\alpha  t^{-1/8 + s/4}   \|\varphi\|_{\dot{H}^s}.
\end{align*}
From the definition of the $X^s$ space, we have $\alpha = 1/2 - s/4$ which implies that the exponent for $t$ is $\alpha - \frac{1}{8} + \frac{s}{4} = \frac{3}{8}$.
Thus, we have the upper bound
\begin{align*}
\sup_{t \in (0,T)} t^{ \alpha - 1/8 + s/4}   \|\varphi\|_{\dot{H}^s}
\le T^{3/8} \| \varphi \|_{\dot{H}^s}.
\end{align*}
Putting our results together for each term gives the desired inequality \eqref{linear-ineq}, where the associated
constant with the upper bound depends on  $s$ and $T$.
\end{proof}

\section{Proof of Theorem 1}
\label{sec:1}
 \setcounter{equation}{0}

We begin by first reformulating the KM equations as a fixed point problem and then proving that the associated
integral operator is a contraction.  The Banach Contraction Mapping Theorem then implies the existence
and uniqueness of solutions to the KM ivp.  After this task is accomplished, we prove that the 
solution map $W$ is Lipschitz continuous.

{\bf Reformulating \eqref{km} as a fixed point problem.}  
We begin by taking the Fourier transform  in the spatial variable of both sides of the $u$ and
$v$ equations in \eqref{km}. 
Additionally, using the properties $\widehat{\p_x u} = i \xi \widehat{u}$ and
$\widehat{\p_t u} = \p_t \widehat{u}$, we get
\begin{align*}
& \partial_{t} \widehat{u} + \widehat{uv} + \xi^2  \widehat{ u} = 0, \\
& \partial_{t}\widehat{v} - \widehat{uv} + \xi^2 \widehat{ v} + \mu\widehat{v}= 0.
\end{align*}
Next, we isolate the terms corresponding to the linear heat equation on the left-hand side and multiply 
by the integrating factor $e^{t \xi^2}$.  The integrating factor allows us 
to then write the left-hand side of each equation as an exact derivative 
as for instance 
$\p_t (e^{t \xi^2} \widehat{u}) = e^{t \xi^2} ( \p_t \widehat{u} + \xi^2 u )$. 
We thus  obtain
\begin{align*}
&\p_t (e^{t \xi^2} \widehat{u}) =  - e^{t \xi^2}  \widehat{uv}, \\
&\p_t (e^{t \xi^2} \widehat{v}) =   e^{t \xi^2} \Big(  \widehat{uv} - \mu \widehat{v}\Big).
\end{align*}
We now integrate both equations from $0$ to $t$ using the Fundamental Theorem of Calculus and  the initial data specified in 
\eqref{km} to get
\begin{align*}
&e^{t \xi^2} \widehat{u} - \widehat{\varphi}  =  - \int_0^t  e^{t' \xi^2}  \widehat{uv} dt', \\
&e^{t \xi^2} \widehat{v} - \widehat{\psi} =  \int_0^t  e^{t' \xi^2} \Big(  \widehat{uv} - \mu \widehat{v}\Big) dt'.
\end{align*}
Next we move the initial data to the right-hand side of each equation and multiply both sides 
of both equations
by $e^{-t \xi^2}$.  Finally, applying the inverse Fourier Transform gives us 
\begin{align*}
&u(x,t) = \frac{1}{2\pi}\int_\mathbb{R} e^{ix\xi} e^{-t \xi^2}\widehat{\varphi} d\xi -
\frac{1}{2\pi}\int_\mathbb{R}  e^{ix\xi} e^{-t \xi^2} \Big(\int_{0}^{t} e^{t'\xi^2} \widehat{uv}dt'\Big) d\xi, \\
&v(x,t) = \frac{1}{2\pi}\int_\mathbb{R} e^{ix\xi} e^{-t \xi^2}\widehat{\psi} d\xi +
    \frac{1}{2\pi}\int_\mathbb{R}  e^{ix\xi} e^{-t\xi^2} \Big(\int_{0}^{t} e^{t'\xi^2} (\widehat{uv} -\mu\widehat{v}) dt'\Big).
\end{align*}
We see further that using the solution operator to the heat equation $S$, we can also rewrite
these equations as
%
\begin{align*}
&    u(x,t) = S(t)\varphi -  \int_{0}^{t} S(t - t') ( uv  )  dt', \\
&    v(x,t) = S(t)\psi + \int_{0}^{t} S(t -t') ( uv  -\mu v ) dt'.
\end{align*}
With the desired form of our equations obtained, we define the operators
\begin{align}
&T_1 (u,v) \doteq S(t)\varphi - \int_{0}^{t} S(t - t') ( uv  ) dt', \\
&T_2 (u,v) \doteq S(t)\psi + \int_{0}^{t} S(t -t') ( uv  -\mu v ) dt'.
\end{align}
We note here that the initial data $\varphi, \psi$ are assumed to be fixed in this construction.  Should the dependence on the initial data be needed, such as in the proof of Lipschitz continuity, we will denote
the operator with an appropriate subscript, for instance $T_1 = T_{1, \varphi}$. 
Finally, we take $T$ to be the two component operator 
\begin{align}
\label{T-def}
T(u,v) \doteq (T_1(u,v), T_2(u,v)).
\end{align}
We now can rewrite our equation as
\begin{align}
(u,v) = T(u,v).
\end{align}

Our next objective will be to demonstrate that $T$ has a fixed point using the Banach Fixed-Point Theorem.  One of the key ingredients we will need to establish a contraction is the following Bilinear Estimate, 
which is  proved  in the next section. 
\begin{proposition} [\textbf{Bilinear Estimate}]
\label{bilinear}
Let $0 \le s <  2$.  Then
there exists a constant $C_b>0$ such that for  $f,g \in X^s$ we have 
\begin{align}
\label{bilinear-ineq}
\Big\| \int_{0}^{t} S(t-t')(f g) dt' \Big\|_{X^s} \le
C_b \|f\|_{X^s}  \|g\|_{X^s}.
\end{align}
\end{proposition}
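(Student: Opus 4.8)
The plan is to estimate the two constituents of the $X^s$-norm of the bilinear integral $\mathcal{B}(t)\doteq\int_0^t S(t-t')(fg)\,dt'$ separately. In each case I would first pull the norm inside the time integral by Minkowski's inequality, and then exploit the smoothing of the heat semigroup to absorb the spatial derivatives onto $S(t-t')$. The recurring mechanism is that a single application of Hölder's inequality places the product $fg$ into a fixed Lebesgue space controlled by the $L^4$-norms of $f$ and $g$; Lemmas~\ref{giga} and~\ref{25} then convert the action of $S(t-t')$ into an algebraic factor $(t-t')^{-\gamma}$, while the time-weighted $L^4$-bounds contained in $\|f\|_{X^s}$ and $\|g\|_{X^s}$ supply a factor $(t')^{-\alpha}$ for each slow variable. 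What remains in every case is a scalar integral of Beta type, $\int_0^t (t-t')^{-\gamma}(t')^{-\delta}\,dt'$, whose convergence and $t$-dependence must be tracked.

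For the $\dot H^s$ component with $0 < s < 2$ I would put the entire derivative on the semigroup: since $\|fg\|_{L^2}\le \|f\|_{L^4}\|g\|_{L^4}$, Lemma~\ref{25} gives $\|D_x^s S(t-t')(fg)\|_{L^2}\le c_s (t-t')^{-s/2}\|f\|_{L^4}\|g\|_{L^4}$, and inserting the weighted bounds $\|f(t')\|_{L^4}\lesssim (t')^{-\alpha}\|f\|_{X^s}$ (and likewise for $g$) leaves $\int_0^t (t-t')^{-s/2}(t')^{-2\alpha}\,dt'$ with $2\alpha = 1 - s/2$. The substitution $t'=t\tau$ shows this equals the $t$-independent Beta integral $B(s/2,\,1-s/2)=\Gamma(s/2)\Gamma(1-s/2)$, finite precisely for $0<s<2$, so this piece is bounded with a constant depending only on $s$. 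For the $L^4$ component I would instead invoke Lemma~\ref{giga} with $q=2$, $p=4$, giving $\|S(t-t')(fg)\|_{L^4}\lesssim (t-t')^{-1/8}\|f\|_{L^4}\|g\|_{L^4}$; the same substitution turns the time integral into $t^{7/8-2\alpha}\,B(s/2,\,7/8)$, and multiplying by the prefactor $t^\alpha$ yields the power $t^{3/8+s/4}\ge 0$, whence the supremum over $(0,T)$ is controlled by a constant times $T^{3/8+s/4}$.

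The endpoint $s=0$ must be treated separately, since there $2\alpha=1$ and both Beta integrals above diverge at the lower limit $t'=0$. The remedy is to break the symmetry in Hölder's inequality: writing $\|fg\|_{L^{4/3}}\le \|f\|_{L^2}\|g\|_{L^4}$, one factor is measured in $\dot H^0 = L^2$ and is bounded uniformly in time by $\|f\|_{X^0}$, while only the other carries the weight $(t')^{-1/2}$. Lemma~\ref{giga} with $q=4/3$ (and $p=2$ for the $L^2$-piece, $p=4$ for the $L^4$-piece) then produces the integrable weight $(t')^{-1/2}$ in place of $(t')^{-1}$, and the integrals $\int_0^t(t-t')^{-1/8}(t')^{-1/2}\,dt'=t^{3/8}B(1/2,7/8)$ and $\int_0^t(t-t')^{-1/4}(t')^{-1/2}\,dt'=t^{1/4}B(1/2,3/4)$ both converge, restoring the bound for the $L^2$- and $L^4$-pieces respectively.

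Collecting the four estimates and taking $\sup_{t\in(0,T)}$ yields \eqref{bilinear-ineq} with a constant $C_b$ depending on $s$ and $T$ through the Beta factors and positive powers of $T$. I expect the main obstacle to be exactly this time-integrability bookkeeping rather than any hard harmonic analysis: the heat-smoothing exponent on $(t-t')$ must stay below $1$, which fails as $s\uparrow 2$ in the $\dot H^s$ piece and pins the upper bound $s<2$, while the combined weight exponent $2\alpha=1-\tfrac{s}{2}$ on $(t')$ must also stay below $1$, which fails as $s\downarrow 0$ and forces the refined, asymmetric Hölder split at the endpoint. Everything else — Minkowski's inequality, the Hölder pairings, and the semigroup bounds of Lemmas~\ref{giga} and~\ref{25} — is routine once the correct exponents are selected.
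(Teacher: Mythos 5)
Your proposal is correct and, for $0<s<2$, follows essentially the same route as the paper: Minkowski's inequality, Lemma \ref{25} (resp.\ Lemma \ref{giga} with $q=2$, $p=4$) to convert the heat smoothing into a factor $(t-t')^{-s/2}$ (resp.\ $(t-t')^{-1/8}$), H\"older's inequality $\|fg\|_{L^2}\le\|f\|_{L^4}\|g\|_{L^4}$, extraction of the weights $(t')^{-2\alpha}$, and a Beta-type scalar integral in time; the paper packages the two component bounds as Lemma \ref{lem1} and the scalar integral as Lemma \ref{cal}. The one genuine difference is your separate treatment of the endpoint $s=0$, and it is a substantive improvement: there $2\alpha=1$, so the integrals $\int_0^t (t-t')^{-s/2}(t')^{-2\alpha}\,dt'$ and $\int_0^t (t-t')^{-1/8}(t')^{-2\alpha}\,dt'$ diverge at $t'=0$, yet the paper applies Lemma \ref{cal} with $a=2\alpha=1$, outside the range $a<1$ needed for convergence, so the published argument has an unflagged gap precisely at $s=0$. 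Your asymmetric split $\|fg\|_{L^{4/3}}\le\|f\|_{L^2}\|g\|_{L^4}$, spending the time-uniform $\dot H^0=L^2$ half of the $X^0$-norm on one factor and the weighted $L^4$ half on the other, produces the convergent integrals $t^{3/8}B(1/2,7/8)$ and $t^{1/4}B(1/2,3/4)$ and repairs the endpoint. A minor cosmetic difference elsewhere: for the time-weighted $L^4$ component the paper splits into two sub-cases according to the size of $\alpha$ (multiplying and dividing by $(t')^{7/8}$ when $\alpha\le 7/16$), whereas your single computation with weight $(t')^{-2\alpha}$, combined with the observation that the total residual power $t^{3/8+s/4}$ is nonnegative, covers all of $0<s<2$ at once.
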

%


We now show the contraction, which completes the proof of well-posedness under a smallness asummption.

\begin{proposition}[Contraction]
\label{contraction}
Let $0 \le s < 2$,
$T < \frac{1}{6\mu}$,  $\rho = \frac{1}{3 C_b}$, 
$ (\varphi, \psi) \in \dot{H}^s \times \dot{H}^s$ satisfying the smallness condition
\begin{align}
\| (\varphi, \psi) \|_{\dot{H}^s \times \dot{H}^s} \leq 
\frac{1}{18 C_\ell C_b}, 
\end{align}
where the constants $C_\ell$ and $C_b$ are given in 
Propositions  \ref{linear-est}  and \ref{bilinear}, respectively.  
Then 
the operator $T$ defined in \eqref{T-def} maps $T: B(0, \rho) \to B(0,\rho)$, where 
$B(0,\rho)\dot{=} 
\{ (u,v) \in {X^s} \times {X^s}:\|(u,v) \|_{{X^s} \times {X^s}}
\leq 
\rho\}$
and 
is a contraction mapping.
\end{proposition}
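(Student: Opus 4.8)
The plan is to verify the two hypotheses of the Banach Fixed-Point Theorem for $T$ on the ball $B(0,\rho)$: that $T$ maps $B(0,\rho)$ into itself, and that $T$ is a contraction there. Both parts rest on the same three ingredients, two of which are already in hand — the linear estimate (Proposition \ref{linear-est}) for the homogeneous terms $S(t)\varphi$ and $S(t)\psi$, and the Bilinear Estimate (Proposition \ref{bilinear}) for the quadratic Duhamel terms $\int_0^t S(t-t')(uv)\,dt'$ — together with an auxiliary linear Duhamel estimate for the term $\mu\int_0^t S(t-t')v\,dt'$ appearing in $T_2$.

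First, for the self-mapping property, I would apply the triangle inequality to $\|T_1(u,v)\|_{X^s}$ and $\|T_2(u,v)\|_{X^s}$ separately and bound each piece: Proposition \ref{linear-est} gives $\|S(t)\varphi\|_{X^s}\le C_\ell\|\varphi\|_{\dot{H}^s}$ (and likewise for $\psi$), while Proposition \ref{bilinear} gives $C_b\|u\|_{X^s}\|v\|_{X^s}$ for each quadratic term. For $(u,v)\in B(0,\rho)$ one has $\|u\|_{X^s}\|v\|_{X^s}\le \tfrac14(\|u\|_{X^s}+\|v\|_{X^s})^2\le \rho^2/4$, and summing the two components gives a bound $C_\ell\|(\varphi,\psi)\|_{\dot{H}^s\times\dot{H}^s}+2C_b(\rho^2/4)$ plus the linear $\mu v$ contribution. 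Using the smallness hypothesis $C_\ell\|(\varphi,\psi)\|_{\dot{H}^s\times\dot{H}^s}\le \tfrac{1}{18C_b}$ and $\rho=\tfrac{1}{3C_b}$, the first two groups contribute $\tfrac{1}{18C_b}+\tfrac{1}{18C_b}=\tfrac{1}{9C_b}$, leaving room of $\tfrac{2}{9C_b}$ below $\rho=\tfrac{1}{3C_b}$ into which the linear term must fit.

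Second, for the contraction, since the data terms $S(t)\varphi,S(t)\psi$ are identical for any two points and cancel in the difference, I would expand the bilinear difference via the telescoping identity $u_1v_1-u_2v_2=u_1(v_1-v_2)+(u_1-u_2)v_2$ and apply Proposition \ref{bilinear} to each summand. Since $\|u_1\|_{X^s},\|v_2\|_{X^s}\le\rho$, each of $T_1,T_2$ produces a factor $C_b\rho\,\|(u_1,v_1)-(u_2,v_2)\|_{X^s\times X^s}$; with $\rho=\tfrac{1}{3C_b}$ the two components together yield $\tfrac{2}{3}\|(u_1,v_1)-(u_2,v_2)\|_{X^s\times X^s}$, plus a further contribution from the $\mu(v_1-v_2)$ term in $T_2$. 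Provided the latter stays below $\tfrac13$, the overall Lipschitz constant is strictly less than $1$, which also establishes the Lipschitz continuity claimed in the theorem.

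The main obstacle — and the one genuinely new estimate needed beyond what is stated — is the control of the linear term $\mu\int_0^t S(t-t')v\,dt'$ in $X^s$, which is not of bilinear type. I would estimate its two norm-components separately: for the $\dot{H}^s$ part, pulling $D_x^s$ inside and using $\|S(\tau)g\|_{\dot{H}^s}\le\|g\|_{\dot{H}^s}$ (as in the proof of Proposition \ref{linear-est}) gives $\int_0^t\|v(t')\|_{\dot{H}^s}\,dt'\le T\|v\|_{X^s}$; for the $t^\alpha L^4$ part, I would apply \Cref{hls} with $p=2,\,q=4,\,\alpha=1/4$ followed by Lemma \ref{25} inside the time integral, exactly as in Proposition \ref{linear-est}, and verify that the resulting exponent $-1/8+s/4$ of $(t-t')$ is integrable on $(0,t)$, so that the time integration produces a positive power of $T$. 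This yields a bound $\mu\,C(T)\|v\|_{X^s}$ with $C(T)\to 0$ as $T\to0$; the hypothesis $T<\tfrac{1}{6\mu}$ then forces $\mu C(T)$ small enough that both the self-map bound closes at $\rho$ and the contraction constant stays strictly below $1$. The one subtlety to watch is the same $s$-range restriction met in Proposition \ref{linear-est}, where the Riesz-potential exponent $1/4-s/2$ must remain nonnegative for Lemma \ref{25} to apply.
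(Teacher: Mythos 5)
Your overall architecture coincides with the paper's: triangle inequality plus Proposition \ref{linear-est} and Proposition \ref{bilinear} for the self-mapping bound, the telescoping identity $u_1v_1-u_2v_2=u_1(v_1-v_2)+v_2(u_1-u_2)$ plus the bilinear estimate for the contraction, and a separate linear Duhamel estimate for the term $\mu\int_0^t S(t-t')v\,dt'$. The bookkeeping with $\rho=\frac{1}{3C_b}$ and the smallness condition is also correct (your AM--GM refinement $\|u\|_{X^s}\|v\|_{X^s}\le\rho^2/4$ is slightly sharper than the paper's bound $\rho^2$; both close the argument).

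The one genuine gap is in your treatment of the $t^\alpha L^4$ component of $\mu\int_0^t S(t-t')v\,dt'$. You propose to rerun the argument from Proposition \ref{linear-est} inside the time integral --- Lemma \ref{hls} with exponent $1/4$ followed by Lemma \ref{25} with derivative order $1/4-s/2$ --- and you correctly flag that Lemma \ref{25} then requires $1/4-s/2\ge 0$, i.e.\ $s\le 1/2$. But the proposition is claimed for the full range $0\le s<2$, so as written your estimate does not cover $1/2<s<2$, and you leave that restriction unresolved rather than removed. The smoothing detour is unnecessary for this term: since the integrand already lies in $L^4$ at each time $t'$, one should instead invoke Lemma \ref{giga} with $p=q=4$, which gives $\|S(t-t')v(t')\|_{L^4}\le\|v(t')\|_{L^4}$ with no time singularity and no constraint on $s$; then
\begin{align*}
t^\alpha\int_0^t\|v(t')\|_{L^4}\,dt'
\le
t^\alpha\Big(\sup_{t'\in(0,T)}(t')^\alpha\|v(t')\|_{L^4}\Big)\int_0^t(t')^{-\alpha}\,dt'
\lesssim
T\,\|v\|_{X^s},
\end{align*}
using $\alpha=\frac12-\frac{s}{4}<1$. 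This is in substance how the paper closes the term (it phrases the time integration via the Mean Value Theorem), and it yields the $\mu T\rho$, respectively $\mu T\|v_1-v_2\|_{X^s}$, contributions that the hypothesis $T<\frac{1}{6\mu}$ is designed to absorb. With that substitution your proof matches the paper's.
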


The key to proving this proposition lies in the Bilinear Estimate, Proposition \ref{bilinear}, whose proof can be found in the next section.    We further note that the hypothesis on $s$ comes from the Bilinear Estimate.

\begin{proof}
We begin by first demonstrating that $T:B(0,\rho) \rightarrow B(0,\rho).$ 
Noting that 
\begin{align*}
\| T(u,v) \|_{X^s \times X^s}  =  \|T_1 (u,v) \|_{X^s} + \|T_2(u,v) \|_{X^s},
\end{align*}  
we proceed by examining the terms on the right-hand side separately.

{\bf Estimating $ \| T_1 (u,v) \|_{X^s} $.}  We have
\begin{align*}
\|T_1(u,v)\|_{X^s}
&=
\Big\|
S(t)\varphi - \int_{0}^{t} S(t - t') ( uv  ) dt'
\Big\|_{X^s}, \\
&\le
\Big\| S(t)\varphi  \Big\|_{X^s}
+
\Big\|  \int_{0}^{t} S(t - t') ( uv  ) dt' \Big\|_{X^s}.
\end{align*}
For the first term, we use Proposition \ref{linear-est}, which gives us
\begin{align*}
\| S(t)\varphi  \|_{X^s} \le C_\ell  \| \varphi \|_{\dot{H}^s}.
\end{align*}
The second term is bounded using Proposition \ref{bilinear}, giving us
\begin{align}
\label{ball-est-1}
\Big\|  \int_{0}^{t} S(t - t') ( uv  ) dt' \Big\|_{X^s}
\le
C_b \|u\|_{X^s} \|v\|_{X^s}
=
C_b \rho^2.
\end{align}
Therefore, we get
\begin{align}
\|T_1(u,v)\|_{X^s}
\le
C_\ell
\|\varphi\|_{\dot{H}^s}
+
C_b \rho^2.
\end{align}

{\bf Estimating $T_2 (u,v) $.}  We have
\begin{align*}
\|T_2(u,v)\|_{X^s}
&=
\Big\|
S(t)\psi +\int_{0}^{t} S(t -t') ( uv  -\mu v ) dt'
\Big\|_{X^s}, \\
&\le
\Big\| S(t)\psi  \Big\|_{X^s}
+
\Big\|  
\int_{0}^{t} S(t -t') ( uv ) dt'
\Big\|_{X^s}
+
\mu
\Big\|  
\int_{0}^{t} S(t -t') v  dt'
\Big\|_{X^s}.
\end{align*}
Again, the linear term is estimated using  Proposition \ref{linear-est}, giving us
\begin{align}
\label{linear T2}
\| S(t) \psi  \|_{X^s} \le C_\ell  \| \psi \|_{\dot{H}^s}.
\end{align}
The second term was already estimated in  \eqref{ball-est-1}.  Thus, we will 
restrict our attention to the third term, leaving out the coefficient of $\mu$ for the moment. 
We begin with the definition of the $X^s$-norm, noting that we will 
reinsert the constant $\mu$ at the end of the computation.  We have
\begin{align*}
&  \Big\| \int_{0}^{t} S(t-t') v dt' \Big\|_{X^s} 
=
 \sup_{t \in (0,T)}  \Big\| \int_{0}^{t} S(t-t') v dt' \Big\|_{\dot{H}^s} 
 + \sup_{t \in (0,T)}  t^\alpha  \Big\|   \int_{0}^{t} S(t-t') v dt'  \Big \|_{L^4}.
\end{align*}
Looking at the expression under the supremum, we see that for the Sobolev term
we have
\begin{align}
\notag
\Big\| \int_{0}^{t} S(t-t') v dt' \Big\|_{\dot{H}^s} 
&\le
\int_{0}^{t} 
\Big\| S(t-t') v dt' \Big\|_{\dot{H}^s},  \\\notag
&=
\int_{0}^{t} 
\Big(\int_\rr |\xi|^{2s} 
e^{-(t - t') \xi^2} | \widehat{v}(t')  |^2 d\xi \Big)^{1/2} dt',
\\
\notag
&\le
\int_{0}^{t} 
\Big(\int_\rr |\xi|^{2s}  | \widehat{v}(t') |^2 d\xi \Big)^{1/2} dt', \\
\label{mean-value-est}
&=
\int_{0}^{t} 
\|  v \|_{\dot{H}^s} dt'.
\end{align}
We can now apply the Mean-Value Theorem, which tells us that for some $t^\star \in (0,T)$, we have
\begin{align*}
\int_{0}^{t} 
\|  v\|_{\dot{H}^s}dt' = t^\star \| v(t^\star)  \|_{\dot{H}^s}.
\end{align*}
Taking the supremum over $t \in (0,T)$ thus gives us
\begin{align*}
\sup_{t \in (0,T)} \Big\| \int_{0}^{t} S(t-t') v dt' \Big\|_{\dot{H}^s}
&\le
\sup_{t \in (0,T)} 
t^\star\| v(t^\star) \|_{\dot{H}^s} 
\le
T \sup_{t \in (0,T)} \|  v \|_{\dot{H}^s}.
\end{align*}
For the time-weighted $L^4$-term, we first obtain an upper bound
by passing the norm into the integral.  We then apply the $L^p$--$L^q$ Linear Estimate, Lemma \ref{giga}, with
$p = q = 4$.  
\begin{align*}
\sup_{t \in (0,T)} t^\alpha  \Big\|   \int_{0}^{t} S(t-t')v dt'  \Big\|_{L^4}
\le
\sup_{t \in (0,T)} t^\alpha  \int_{0}^{t}   \|  v \|_{L^4}  dt'.
\end{align*}
Again, we  will apply the Mean-Value Theorem, giving us for some $t^\star \in (0,T)$, 
\begin{align*}
\sup_{t \in (0,T)} t^\alpha  \int_{0}^{t}   \| v \|_{L^4}  dt'
=
\sup_{t \in (0,T)} t^\alpha   t^\star  \|  v(t^\star)  \|_{L^4}  dt'.
\end{align*}
To bound this above and remove the dependence on $t^\star$, we note that 
$t^\star = (t^\star)^\alpha (t^\star)^{1- \alpha}$ and $0 \le \alpha \le 1$.  Then we use the fact that 
$t^\alpha \le T^\alpha$ and $(t^\star)^{1- \alpha} \le T^{1-\alpha}$.  Thus we get
\begin{align*}
\sup_{t \in (0,T)} t^\alpha   t^\star  \|  v(t^\star) \|_{L^4} 
&\le
T \sup_{t \in (0,T)}  (t^\star)^\alpha  \|  v(t^\star)  \|_{L^4}
\le
T \sup_{t \in (0,T)}  t^\alpha  \|  v  \|_{L^4}.
\end{align*}
Thus we can conclude
\begin{align}
\label{bilinear T2}
\mu
\Big\|   \int_{0}^{t}   v  dt' \Big\|_{X^s}
\le 
\mu T  \|v \|_{X^s} \le \mu T \rho.
\end{align}
Putting these results together, \eqref{linear T2} and \eqref{bilinear T2}, we get
\begin{align}
\label{ball-est-2}
\|T_2(u,v)\|_{X^s}
\le 
C_\ell
\|\psi \|_{\dot{H}^s} +
C_b \rho^2 +  \mu T \rho.
\end{align}

{\bf Estimating $ \| T (u,v) \|_{X^s \times X^s} $.}   Putting our results together, \eqref{ball-est-1} and \eqref{ball-est-2}, 
we get
\begin{align*}
\| T (u,v) \|_{X^s \times X^s} 
&=
\|T_1(u,v) \|_{X^s}
+
\|T_2(u,v) \|_{X^s}, \\
&\le
C_\ell \| (\varphi, \psi) \|_{\dot{H}^s \times \dot{H}^s}
+ 2 C_b \rho^2 + \mu T \rho,
\end{align*}
where $C_\ell$ is the constant associated with the upper bound from Lemma \ref{linear-est} and $C_b$
is the constant associated with the Bilinear Estimate, Proposition \ref{bilinear}.  Now using the hypothesis, 
$\rho = \frac{1}{3C_b}$ and $T < \frac{1}{6 \mu}$, we see that
\begin{align*}
C_\ell \| (\varphi, \psi) \|_{\dot{H}^s \times \dot{H}^s}
+ 2 C_b \rho^2 + \mu T \rho
&<
C_\ell \| (\varphi, \psi) \|_{\dot{H}^s \times \dot{H}^s} 
+ 2 C_b \Big( \frac{1}{3C_b} \Big) \rho + \mu \frac{1}{6\mu} \rho, \\
&=
C_\ell \| (\varphi, \psi) \|_{\dot{H}^s \times \dot{H}^s} 
+
\frac{5}{6} \rho.
\end{align*}
Thus, in order for $T$ to map $B(0,\rho)$  into $B(0,\rho)$, 
we must have
\begin{align*}
C_\ell \| (\varphi, \psi) \|_{\dot{H}^s \times \dot{H}^s} 
\le
\frac{1}{6} \rho,
\end{align*}
which is equivalent to our hypothesis
\begin{align*}
 \| (\varphi, \psi) \|_{\dot{H}^s \times \dot{H}^s} 
 \le \frac{1}{18 C_\ell C_b}.
\end{align*}

{\bf Contraction.}   
Recalling that the initial data $(\varphi, \psi)$ are fixed, we take two pairs of functions
$(u_1, v_1), (u_2, v_2) \in X^s \times X^s$ and compute the norm of 
$T(u_1, v_1) - T(u_2, v_2)$.  We break this  computation down by first examining the component
operators $T_1$ and $T_2$.

{\bf Estimating  the $T_1$ difference. }
We add and subtract a mixed term $u_1 v_2$  and regroup terms to obtain
\begin{align*}
T_1(u_1,v_1) -   T_1(u_2,v_2)
&=
-\int_{0}^{t} S(t-t') \Big( u_1v_1  - u_2 v_2\Big)dt', \\
&=
-\int_{0}^{t} S(t-t') \Big(  u_1 (v_1 - v_2) \Big)dt' 
-
\int_{0}^{t} S(t-t') \Big(  v_2 (u_1 - u_2)\Big)dt'.   
\end{align*}
We now examine this difference in the $X^s$ norm.  
After applying the triangle inequality, we use Bilinear Estimate, Proposition \ref{bilinear}, 
to further bound the nonlinearities.   We get
\begin{align*}
&\| T_1(u_1,v_1) -   T_1(u_2,v_2) \|_{X^s} \\
&\quad\quad\quad\le
\Big\|
\int_{0}^{t} S(t-t') \Big(  u_1 (v_1 - v_2) \Big)dt' 
\Big\|_{X^s}
+
\Big\|
\int_{0}^{t} S(t-t') \Big(  v_2 (u_1 - u_2)\Big)dt'
\Big\|_{X^s}, \\
&\quad\quad\quad\le
C_b 
\|u_1\|_{X^s} \|v_1 - v_2 \|_{X^s} 
+
C_b
\|v_2\|_{X^s}  \|u_1- u_2 \|_{X^s} .
\end{align*}
Now using the hypothesis that both $\|u_1\|_{X^s}$ and $\|v_2\|_{X^s}$ are bounded by 
the constant $\rho$,  and recalling that the norm on the product space is the sum of the norms, 
we get
\begin{align}
\label{T1-full-est}
\| T_1(u_1,v_1) -   T_1(u_2,v_2) \|_{X^s}
\le \rho C_b \| (u_1 - u_2, v_1 - v_2) \|_{X^s \times X^s}.
\end{align}

{\bf Estimating  the $T_2$ difference.} We first break up the integral to separate the linear term, giving us
\begin{align*}
&T_2(u_1,v_1) -     T_2(u_2,v_2) \\
&\quad\quad\quad\quad=
\int_{0}^{t} S(t-t') \Big( u_1v_1  - u_2 v_2\Big)dt
- \int_{0}^{t} S(t-t') \mu( v_1 -  v_2) dt'.
\end{align*}
The first term under the $X^s$ norm is precisely the same as that of $T_1$.  Thus, adding and subtracting
a mixed term  and then applying the Bilinear estimate leads to the same result.  
Thus we get
\begin{align}
\label{T2-diff}
\notag
&\| T_2(u_1,v_1) -     T_2(u_2,v_2) \|_{X^s} \\ 
&\quad\quad\quad\le  
\rho C_b \|(u_1  - u_2, v_1 - v_2)\|_{X^s \times X^s}
+
\mu \Big\| \int_{0}^{t} S(t-t') ( v_1 -  v_2) dt' \Big\|_{X^s}.
\end{align}
To estimate the linear term, we begin with the definition of the $X^s$-norm, noting that we will 
reinsert the constant $\mu$ at the end of the computation.  We have
\begin{align*}
&\Big\| \int_{0}^{t} S(t-t') ( v_1 -  v_2) dt' \Big\|_{X^s} \\
\notag
&\quad\quad\quad\ =
 \sup_{t \in (0,T)} \Big\| \int_{0}^{t} S(t-t') ( v_1 -  v_2) dt' \Big\|_{\dot{H}^s} 
 + \sup_{t \in (0,T)} t^\alpha \Big \|   \int_{0}^{t} S(t-t') ( v_1 -  v_2) dt'  \Big\|_{L^4}.
\end{align*}
The expression under the supremum can be bounded in precisely the same
fashion as \eqref{mean-value-est}, giving us 
\begin{align*}
\Big\| \int_{0}^{t} S(t-t') ( v_1 -  v_2) dt' \Big\|_{\dot{H}^s} 
&\le
\int_{0}^{t} 
\|  v_1 - v_2 \|_{\dot{H}^s} dt'.
\end{align*}
We can now apply the Mean-Value Theorem, which tells us that for some $t^\star \in (0,T)$, we have
\begin{align*}
\int_{0}^{t} 
\|  v_1 - v_2 \|_{\dot{H}^s}dt' = t^\star \| v_1(t^\star) - v_2(t^\star) \|_{\dot{H}^s}^2.
\end{align*}
Taking the supremum over $t \in (0,T)$ thus gives us
\begin{align}
\label{t2-sobolev}
\sup_{t \in (0,T)} \Big\| \int_{0}^{t} S(t-t') ( v_1 -  v_2) dt' \Big\|_{\dot{H}^s}
&\le  \notag
\sup_{t \in (0,T)} 
t^\star\| v_1(t^\star) - v_2(t^\star) \|_{\dot{H}^s}, \\ 
&\le
T \sup_{t \in (0,T)} \|  v_1 - v_2 \|_{\dot{H}^s}.
\end{align}
For the time-weighted $L^4$-term, we apply the $L^p$--$L^q$ Linear Estimate, Lemma \ref{giga}, with
$p = q = 4$.  
\begin{align*}
\sup_{t \in (0,T)} t^\alpha \Big \|   \int_{0}^{t} S(t-t') ( v_1 -  v_2) dt'  \Big\|_{L^4} 
&\le
\sup_{t \in (0,T)} t^\alpha  \int_{0}^{t}   \| S(t-t') ( v_1 -  v_2) \|_{L^4}  dt', \\
&\le
\sup_{t \in (0,T)} t^\alpha  \int_{0}^{t}   \|  v_1 -  v_2 \|_{L^4}  dt'.
\end{align*}
Again, we will apply the Mean-Value Theorem, giving us for some $t^\star \in (0,T)$, 
\begin{align*}
\sup_{t \in (0,T)} t^\alpha  \int_{0}^{t}   \|  v_1 -  v_2 \|_{L^4}  dt'
=
\sup_{t \in (0,T)} t^\alpha   t^\star  \|  v_1(t^\star) -  v_2(t^\star) \|_{L^4}  dt'.
\end{align*}
To bound this above and remove the dependence on $t^\star$, we note that 
$t^\star = (t^\star)^\alpha (t^\star)^{1- \alpha}$ and $0 \le \alpha \le 1$.  Then we use the fact that 
$t^\alpha \le T^\alpha$ and $(t^\star)^{1- \alpha} \le T^{1-\alpha}$.  Thus we get
\begin{align}
\label{t2-l4}
\sup_{t \in (0,T)} t^\alpha   t^\star  \|  v_1(t^\star) -  v_2(t^\star) \|_{L^4} 
&\le  \notag
T \sup_{t \in (0,T)}  (t^\star)^\alpha  \|  v_1(t^\star) -  v_2(t^\star) \|_{L^4}, \\
&\le
T \sup_{t \in (0,T)}  t^\alpha  \|  v_1 -  v_2 \|_{L^4}.
\end{align}
Putting everything together, \eqref{t2-sobolev} and \eqref{t2-l4}, we get
the estimate for the  term of $T_2$ as
\begin{align}
\notag
\mu  \Big\| \int_{0}^{t} S(t-t') ( v_1 -  v_2) dt' \Big\|_{X^s} 
&\le
\mu T \big( \sup_{t \in (0,T)} \|  v_1 - v_2 \|_{\dot{H}^s}
+
\sup_{t \in (0,T)}  t^\alpha  \|  v_1 -  v_2 \|_{L^4} \big), \\ 
\label{T2-diff-linear}
&\le
2 \mu T \| v_1 - v_2 \|_{X^s}.
\end{align}
Finally, we obtain the full estimate for $T_2$ as \eqref{T2-diff}, where the second term is bounded by
 \eqref{T2-diff-linear}. Hence
\begin{align}
\label{T2-ful-est}
&\| T_2(u_1,v_1) -     T_2(u_2,v_2) \|_{X^s} 
\le
( \rho C_b + 2 \mu T)  \|(u_1  - u_2, v_1 - v_2)\|_{X^s \times X^s}.
\end{align}

{\bf Combined Estimate for  the $T$ difference.}  Putting together our estimates for $T_1$ and $T_2$, namely, \eqref{T1-full-est} and \eqref{T2-ful-est},
we have
\begin{align}
\notag
&\|T(u_1, v_1) - T(u_2, v_2) \|_{X^s \times X^s} \\
\notag
&\quad\quad\quad=
\|T_1(u_1, v_1) - T_1(u_2, v_2) \|_{X^s }
+
\|T_2(u_1, v_1) - T_2(u_2, v_2) \|_{X^s }, \\
\label{contraction-est}
&\quad\quad\quad\le
(2 \rho C_b + 2 \mu T)  \|(u_1  - u_2, v_1 - v_2)\|_{X^s \times X^s}.
\end{align}
Our hypotheses that $\rho = \frac{1}{3C_b}$ and $T < \frac{1}{6\mu}$ now allow us to conclude 
that
\begin{align*}
2 \rho C_b + 2 \mu T < 1,
\end{align*}
and thus $T$ is indeed a contraction mapping.
\end{proof}

With the proof of Proposition  \ref{contraction} complete, we now are ready to prove 
Theorem \ref{main}. 
\begin{proof}[Proof of Theorem \ref{main}]
We see that the fixed point from Proposition \ref{contraction} gives us
the existence and uniqueness of the solutions to KM ivp \eqref{km}, thus we restrict our attention to proving that the
solution map $W$ is Lipschitz continuous.

Let $(\varphi_1, \psi_1), (\varphi_2, \psi_2) \in \dot{H}^s \times \dot{H}^s$, and
$(u_1, v_1), (u_2, v_2) \in X^s \times X^s$ be the corresponding solutions to the KM initial value 
problem with these initial data respectively.  Thus, for $W$ the solution operator to KM, 
our objective is to estimate the difference
\begin{align}
\label{W-dif}
\| W(\varphi_1, \psi_1) - W(\varphi_2, \psi_2) \|_{X^s \times X^s}
=
\|(u_1, v_1) - (u_2, v_2) \|_{X^s \times X^s}.
\end{align}

We see that we can reframe this question using the operator $T$ established above in 
\eqref{T-def}.  To use $T$, however, we must have fixed initial data.  Thus we will assume that
$T$ in this instance uses the initial data $(\varphi, \psi) = (0,0)$.  Thus, we can rewrite the difference
inside of the norm in \eqref{W-dif} as
\begin{align*}
W(\varphi_1, \psi_1) - W(\varphi_2, \psi_2)
=
(S(t) ( \varphi_1 - \varphi_2) , S(t) (\psi_1 - \psi_2)) +  (T(u_1,v_1) - T(u_2, v_2)).
\end{align*}
We therefore can now rewrite \eqref{W-dif} as
\begin{align*}
&\| W(\varphi_1, \psi_1) - W(\varphi_2, \psi_2) \|_{X^s \times X^s} \\
&\quad\quad\quad \le \notag
\| (S(t) ( \varphi_1 - \varphi_2) , S(t) (\psi_1 - \psi_2))  \|_{X^s \times X^s} 
+
\|(T(u_1,v_1) - T(u_2, v_2) \|_{X^s \times X^s}.
\end{align*}
For the first term on the right-hand side, we use our linear estimate from Lemma \ref{linear-est}, giving us 
\begin{align*}
\| (S(t) ( \varphi_1 - \varphi_2) , S(t) (\psi_1 - \psi_2))  \|_{X^s \times X^s} 
\le C_\ell \| ( \varphi_1 - \varphi_2 ,  \psi_1 - \psi_2)  \|_{\dot{H}^s \times \dot{H}^s}.
\end{align*}
For the second term, we will use the work from proving that $T$ is a contraction.   Setting
our contraction constant as 
$C = 2 \rho C_b + 2 \mu T < 1$, we use 
\eqref{contraction-est} to give us
\begin{align*}
\|(T(u_1,v_1) - T(u_2, v_2) \|_{X^s \times X^s} 
&\le 
C \|(u_1 - u_2 ,v_1 - v_2)  \|_{X^s \times X^s}, \\
&=
C \| W(\varphi_1, \psi_1) - W(\varphi_2, \psi_2) \|_{X^s \times X^s}.
\end{align*}
Thus, we get
\begin{align*}
&\| W(\varphi_1, \psi_1) - W(\varphi_2, \psi_2) \|_{X^s \times X^s}  \\ \notag
&\quad\quad\quad \le
C_\ell \| ( \varphi_1 - \varphi_2 ,  \psi_1 - \psi_2)  \|_{\dot{H}^s \times \dot{H}^s} 
+
C \| W(\varphi_1, \psi_1) - W(\varphi_2, \psi_2) \|_{X^s \times X^s}.
\end{align*}
Simplifying this inequality, gives us
\begin{align*}
&\| W(\varphi_1, \psi_1) - W(\varphi_2, \psi_2) \|_{X^s \times X^s}  
 \le
\frac{C_\ell}{1- C} \| ( \varphi_1 - \varphi_2 ,  \psi_1 - \psi_2)  \|_{\dot{H}^s \times \dot{H}^s},
\end{align*}
which implies that $W$ is Lipschitz continuous.
\end{proof}

\section{Bilinear Estimates}
\label{sec:1}
 \setcounter{equation}{0}

We begin with a proof of  Proposition \ref{bilinear}.    This proof in turn requires analogous
estimates in $\dot{H}^s$ and the time-weighted $L^4$-space, which were proved in Lemma
\ref{lem1}.

\begin{proof}[Proof of Proposition \ref{bilinear}]
Starting with  the definition of the $X^s$-norm, and passing the norms inside 
the integrals to bound above, we get 
\begin{align*}
&\Big\| \int_{0}^{t} S(t-t')(f g) dt' \Big\|_{X^s}  \\
&\quad\quad\quad\le
\sup_{t \in (0,T)}  \int_{0}^{t}  \Big\| S(t-t')(f g)\Big\|_{\dot{H}^s} dt' +
\sup_{t \in (0,T)} t^{\alpha}\int_{0}^{t}  \Big\| S(t-t')(f g)dt' \Big\|_{L^4} dt'.
\end{align*}
We now apply  \Cref{lem1} to each term on the right-hand side of this equation.
For the first term, we get 
\begin{align}
\label{bilinear-term-1}
& \sup_{t \in (0,T)}\int_{0}^{t} \Big\| 
S(t-t') (f g)(t') \Big\|_{\dot{H}^s} dt' \lesssim \sup_{t \in (0,T)} t^{2\alpha}  \|f  \|_{L^4} \|g  \|_{L^4}.
\end{align}
For the second term, we have
\begin{align}
\label{bilinear-term-2}
& \sup_{t \in (0,T)} t^{\alpha} \int_{0}^{t} \Big\|  S(t-t')(f g) \Big\|_{L^4} dt' 
\lesssim \sup_{t \in (0,T)} t^{2\alpha}  \|f  \|_{L^4} \|g  \|_{L^4}.
\end{align}
Combining \eqref{bilinear-term-1} and \eqref{bilinear-term-2}, we get
\begin{align*}
\Big\| \int_{0}^{t} S(t-t')(f g) dt' \Big\|_{X^s}  
&\lesssim
\sup_{t \in (0,T)}
t^{2\alpha} \| f\|_{L^4} \|g\|_{L^4},  \\
&\le
\Big(
\sup_{t \in (0,T)}
t^{\alpha} \| f\|_{L^4} 
\Big)
\Big(
\sup_{t \in (0,T)}
t^{\alpha} \| g\|_{L^4} 
\Big), \\
&\le
\|f\|_{X^s} \|g\|_{X^s}.
\end{align*}
The constant associated with the upper bound comes from Lemma \ref{lem1}, and as its particular
value is used to establish the Contraction in Theorem 1, we label it as  $C_b$.
\end{proof}

To complete the argument for Proposition \ref{bilinear}, we use the  following
beautiful calculus estimate related to the Beta distribution. A proof is given in \cite{ho}, A.2.
 
 \begin{lemma}
 \label{cal}
 Let $0 < a$, $b < 1$ 
 and $r = a + b -1 $ with
$0 \leq r \leq 1$,
then we have the following bound
\begin{align}
    B (a, b, t)
\dot{=}
    \int _{0}^{t} (t - y)^{-b}
    y^{-a} 
    dy
\leq
    c_{a, b} t^{-r}.
\end{align}
\end{lemma}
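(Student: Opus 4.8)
The plan is to exploit the scaling structure of the integral: a single change of variables collapses all of the $t$-dependence into the prefactor $t^{-r}$ and leaves behind a pure constant, which one recognizes as a value of the Euler Beta function. Concretely, I would substitute $y = ts$, so that $dy = t\,ds$ and the limits $y \in (0,t)$ become $s \in (0,1)$. This is the only idea needed; everything else is bookkeeping.

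Carrying out the substitution, the integrand factors and the powers of $t$ pull out:
\begin{align*}
B(a,b,t)
&= \int_0^t (t-y)^{-b} y^{-a}\, dy
= \int_0^1 \big(t(1-s)\big)^{-b}(ts)^{-a}\, t\, ds \\
&= t^{\,1-a-b}\int_0^1 (1-s)^{-b} s^{-a}\, ds.
\end{align*}
Since $1-a-b = -(a+b-1) = -r$, the right-hand side is exactly $t^{-r}$ times the constant $\int_0^1 s^{-a}(1-s)^{-b}\,ds$, which is the Beta-function value $\Gamma(1-a)\Gamma(1-b)/\Gamma(2-a-b)$. Thus the asserted inequality in fact holds with equality once we set $c_{a,b} = \int_0^1 s^{-a}(1-s)^{-b}\,ds$, so the content of the lemma reduces entirely to showing this constant is finite.

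The one point requiring verification, and hence the only place where the hypotheses are used, is the convergence of this remaining integral at the two endpoints. Near $s = 0$ the integrand behaves like $s^{-a}$, which is integrable precisely because $a < 1$; near $s = 1$ it behaves like $(1-s)^{-b}$, which is integrable precisely because $b < 1$. These are exactly the standing assumptions, so $c_{a,b} < \infty$ and the bound follows. I do not expect any genuine obstacle here: the substitution is elementary and the endpoint integrability check is the whole substance of the conditions $a,b < 1$. The additional requirement $0 \le r \le 1$ plays no role in the estimate itself; it merely records the range of the exponent $-r$ that is relevant when this lemma is invoked in the proof of the bilinear estimate (Lemma~\ref{lem1}).
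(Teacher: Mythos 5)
Your proof is correct: the substitution $y=ts$ pulls out exactly $t^{-r}$ and reduces the claim to the finiteness of the Euler Beta integral $\int_0^1 s^{-a}(1-s)^{-b}\,ds$, which is guaranteed by $a,b<1$; you are also right that the hypothesis $0\le r\le 1$ is not needed for the bound itself. The paper does not include its own proof of this lemma (it defers to the cited reference), and your scaling argument is the standard one, so there is nothing to flag.
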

With this estimate in hand, 
we now proceed to compute the following estimates on the components of the $X^s$-norm, which completes
our arguments. 
\begin{lemma}
\label{lem1}

Let $f$ and $g \in X^{s}$, with $0  \le  s <  2$ and $0 \leq t < T < \infty$. Then the following inequalities hold 
\begin{align}
\label{bilinear-1}
    \sup_{t \in (0,T)}\int_{0}^{t}\Big\| S(t- t')(fg) \Big \|_{\dot{H}^s} dt' &\lesssim 
    \sup_{t \in (0,T)} t^{ 2\alpha }  \|f  \|_{L^4} \|g  \|_{L^4},  \\
    \label{bilinear-2}
    \sup_{t \in (0,T)} t^{ \alpha } \int_{0}^{t}\Big\| S(t- t')(fg) \Big \|_{L^4} dt' 
    &\lesssim \sup_{t \in (0,T)} t^{2\alpha }  \|f  \|_{L^4} \|g  \|_{L^4}.
\end{align}
\end{lemma}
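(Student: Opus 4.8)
The plan is to prove both inequalities by the same three-step template. First I would bound $\|S(t-t')(fg)\|$ in the relevant norm by a negative power of the elapsed time $(t-t')$ times $\|fg\|_{L^2}$, using the heat-smoothing estimates of Lemma \ref{25} and Lemma \ref{giga}. Then I would apply the generalized H\"older inequality $\|fg\|_{L^2}\le\|f\|_{L^4}\|g\|_{L^4}$, which is precisely what forces the exponent $p=4$ (two $L^4$ factors multiply into $L^2$). Finally I would insert the time weight pointwise and evaluate the resulting convolution-type integral with the Beta estimate, Lemma \ref{cal}. Throughout, writing $N=\sup_{\tau\in(0,T)}\tau^{2\alpha}\|f(\tau)\|_{L^4}\|g(\tau)\|_{L^4}$, which is finite for $f,g\in X^s$ and is exactly the common right-hand side of \eqref{bilinear-1}--\eqref{bilinear-2}, I would use the elementary pointwise consequence $\|f(t')\|_{L^4}\|g(t')\|_{L^4}\le(t')^{-2\alpha}N$.

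For \eqref{bilinear-1}, I would apply Lemma \ref{25} with exponent $s$ to get $\|S(t-t')(fg)\|_{\dot H^s}=\|D_x^s S(t-t')(fg)\|_{L^2}\le c_s(t-t')^{-s/2}\|fg\|_{L^2}$, and then H\"older together with the pointwise weight to reach the integrand $c_s(t-t')^{-s/2}(t')^{-2\alpha}N$. Integrating in $t'$ produces $B(2\alpha,\tfrac{s}{2},t)$, so Lemma \ref{cal} applies with $a=2\alpha$ and $b=\tfrac{s}{2}$. The decisive point is that the defining relation $\alpha=\tfrac12-\tfrac{s}{4}$ makes $a+b=2\alpha+\tfrac{s}{2}=1$, hence $r=0$ and $B\le c_{a,b}t^{0}=c_{a,b}$; taking the supremum in $t$ then yields exactly $\lesssim N$.

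For \eqref{bilinear-2}, I would instead invoke the $L^2$--$L^4$ smoothing of Lemma \ref{giga} with $q=2$, $p=4$, giving $\|S(t-t')(fg)\|_{L^4}\le c(t-t')^{-1/8}\|fg\|_{L^2}$ (equivalently, one may factor $D_x^{-1/4}D_x^{1/4}$ and combine \Cref{hls} with Lemma \ref{25}, exactly as in the proof of Proposition \ref{linear-est}). After H\"older and the time weight, Lemma \ref{cal} applies with $a=2\alpha$ and $b=\tfrac18$, now giving $r=\tfrac18-\tfrac{s}{2}$ and $\int_0^t(t-t')^{-1/8}(t')^{-2\alpha}\,dt'\lesssim t^{\,s/2-1/8}$. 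Multiplying by the outer weight $t^\alpha$ leaves the power $\alpha+\tfrac{s}{2}-\tfrac18=\tfrac38+\tfrac{s}{4}>0$, which is bounded on $(0,T)$ by $T^{3/8+s/4}$, so taking the supremum again produces $\lesssim N$ with a $T$-dependent constant. The product of the constants $c_s$, $c_{a,b}$, the H\"older/HLS constants, and the power of $T$ is what one names $C_b$ in Proposition \ref{bilinear}.

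The main technical hurdle is the exponent bookkeeping in the convolution integral: one must verify that $a=2\alpha$ and the respective $b$ land in the admissible range $(0,1)$ of Lemma \ref{cal} and that $r$ comes out as claimed, which is exactly where the tuned relationship $2\alpha+\tfrac{s}{2}=1$ is used. The genuinely delicate point is the endpoint $s=0$, where $2\alpha=1$ and the interior weight $(t')^{-2\alpha}=(t')^{-1}$ becomes critically non-integrable near $t'=0$, placing the argument on the boundary of the hypotheses of Lemma \ref{cal}; here one must lean on the vanishing condition $\lim_{t\to0^+}t^\alpha\|\cdot\|_{L^4}=0$ built into $C^\alpha_0$. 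Accordingly I would establish the estimates for $0<s<2$ by the Beta-integral computation above and treat $s=0$ separately as the critical case.
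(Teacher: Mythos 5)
Your proof of \eqref{bilinear-1} is exactly the paper's: Lemma \ref{25} with exponent $s$, the generalized H\"older inequality into two $L^4$ factors, the weight $(t')^{-2\alpha}$, and Lemma \ref{cal} with $r=\tfrac{s}{2}+2\alpha-1=0$. For \eqref{bilinear-2} you diverge in one respect: the paper splits into the cases $0<\alpha\le\tfrac{7}{16}$ and $\tfrac{7}{16}<\alpha\le\tfrac{7}{8}$, using the interior weight $(t')^{7/8}$ in the first case and $(t')^{2\alpha}$ in the second, precisely so that the exponent $r=a+b-1$ always lands in the admissible window $0\le r\le 1$ of Lemma \ref{cal}; you instead use $(t')^{2\alpha}$ uniformly, which gives $r=\tfrac18-\tfrac{s}{2}<0$ whenever $s>\tfrac14$, so Lemma \ref{cal} as stated does not literally apply there. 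Your claimed bound $\int_0^t(t-t')^{-1/8}(t')^{-2\alpha}\,dt'\lesssim t^{7/8-2\alpha}$ is nonetheless correct for $2\alpha<\tfrac78$ (both exponents lie in $[0,1)$, and the scaling substitution $t'=t\sigma$ gives the exact value $t^{1-a-b}\int_0^1(1-\sigma)^{-b}\sigma^{-a}\,d\sigma$), so this is a citation mismatch rather than a mathematical gap; either extend Lemma \ref{cal} to $r\le 0$ or adopt the paper's two-weight case split, and note that your final power $T^{3/8+s/4}$ agrees with the paper's $T^{7/8-\alpha}$. One point where you are actually more careful than the paper: at $s=0$ one has $2\alpha=1$, the interior weight becomes $(t')^{-1}$, and the Beta integral diverges, so the hypotheses of Lemma \ref{cal} fail in both arguments; your suggestion to treat this critical endpoint separately via the vanishing condition $\lim_{t\to0^+}t^{\alpha}\|\cdot\|_{L^4}=0$ built into $C^{\alpha}_0$ is the standard Kato--Fujita remedy and addresses an issue the paper's proof passes over in silence.
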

\begin{proof} The argument for each of these estimates follows similar lines, but we examine 
each one separately.

\vspace{.1in}

{\bf Estimating \eqref{bilinear-1}.}
Using the definition of the Riesz Derivative, we begin with 
\begin{align*}
\int_{0}^{t}\Big\| S(t- t') (fg)\Big \|_{\dot{H}^s} dt'
=
\int_{0}^{t}\Big\|D_x^s S(t- t') (fg) \Big \|_{L^2} dt'
\end{align*}
We now apply Lemma \ref{25}, which requires $s \ge 0$,  to obtain
\begin{align*}
\int_{0}^{t}\Big\|D_x^s S(t- t') (fg) \Big \|_{L^2} dt' \leq  
\int_{0}^{t} (t- t')^{-\frac{s}{2}} \| f(t') g(t') \|_{L^2} dt'. 
\end{align*}
Next, the generalized Hölder inequality
allows us to break the $L^2$ norm in the integrand into a product of $L^4$ norms, 
giving us
\begin{align*}
\int_{0}^{t} (t- t')^{-\frac{s}{2}} \| f(t') g(t') \|_{L^2} dt'
\leq  
\int_{0}^{t} (t- t')^{-\frac{s}{2}} \|f(t') \|_{L^4}\|g(t') \|_{L^4}dt'.
\end{align*}
We now multiply and divide by $(t')^{2\alpha}$
and then pull out the factor  $(t')^{2\alpha} \| f(t')\|_{L^4} \|g(t')\|_{L^4}$
by taking a supremum over time.  This gives us the upper bound   
\begin{align}
\label{bilinear-inter-1}
\int_{0}^{t} (t- t')^{-\frac{s}{2}} \|f(t') \|_{L^4}\|g(t') \|_{L^4}dt' \leq 
\sup_{t \in (0,T)} t^{ 2 \alpha } \|f(t) \|_{L^4}\|g(t) \|_{L^4} \int_{0}^{t} (t- t')^{-\frac{s}{2}}  (t')^{- 2 \alpha} dt'.
\end{align}
 The remaining integral can now be handled by Lemma
 \ref{cal}. The choice of multiplying
 and dividing by $(t')^{2\alpha}$ is now apparent as this allows us to satisfy the hypothesis for 
 $r = \frac{s}{2}  + 2\alpha - 1$.  Noting that we require $0 \le r \le 1$, 
 we see that our definition of $\alpha = \frac{1}{2} - \frac{s}{4}$ gives us 
\begin{align*}
r 
=  \frac{s}{2}  + 2\alpha - 1  = \frac{s}{2} + 2 \Big(\frac{1}{2} - \frac{s}{4} \Big) - 1 = 0.
\end{align*}
We see here that this $r$ restriction is always satisfied regardless of the value of $s$.  
The additional requirements of Lemma \ref{cal} are satisfied so long as we take
$0 \le s < 2$ and $0 < \alpha \le 1/2$.
Thus for a constant that only depends on $s$, as we have   $\alpha$   a function of $s$, 
we get
\begin{align*}
\int_{0}^{t} (t- t')^{-\frac{s}{2}}  (t')^{- 2 \alpha} dt'
< c_{s}.
\end{align*}
We therefore further bound \eqref{bilinear-inter-1} by 
\begin{align*}
\sup_{t \in (0,T)} t^{ 2 \alpha } \|f(t) \|_{L^4}\|g(t) \|_{L^4} \int_{0}^{t} (t- t')^{-\frac{s}{2}}  (t')^{- 2 \alpha} dt'
\le
c_s
\sup_{t \in (0,T)} t^{ 2 \alpha } \|f(t) \|_{L^4}\|g(t) \|_{L^4}.
\end{align*}
This chain of inequalities thus establishes \eqref{bilinear-1}.

{\bf Estimating \eqref{bilinear-2}.}
Before estimating the full expression on the left-hand side of \eqref{bilinear-2} 
we first estimate its integrand. 
To accomplish this, 
we begin by using Lemma \ref{giga} which is the $L^p$-$L^q$ Heat Kernel Estimate
and the   generalized Hölder inequality 
to get
\begin{align*}
\| S(t- t') (fg) \|_{L^4}
&\lesssim
 (t - t')^{-\frac{1}{8}}
 \| f  (t') g (t')  \|_{L^2}
 \lesssim
 (t - t')^{-\frac{1}{8}}
 \| f  (t') \|_{L^4} \| g (t')  \|_{L^4}.
\end{align*}
This upper bound for $\| S(t- t') (fg) \|_{L^4}$ thus
gives us
\begin{align}
\label{alpha-ub-1}
\sup_{t \in (0,T)} t^{ \alpha } \int_{0}^{t}\Big\| S(t- t')(fg) \Big \|_{L^4} dt' 
\lesssim
\sup_{t \in (0,T)} 
 t^{\alpha }  \int_{0}^{t}
(t - t')^{-\frac{1}{8}}
\| f   (t') \|_{L^4} \| g (t')  \|_{L^4} dt'.
\end{align}
While our construction  would suggest examining 
$0 < \alpha \le \frac{1}{2}$, we in fact are able to prove this estimate for $0 < \alpha \le \frac{7}{8}$; though
the values of $\alpha > \frac{1}{2}$ are unused.
To continue estimating \eqref{alpha-ub-1}, our argument  splits based into the cases where $0 < \alpha \le \frac{7}{16}$ and
$\frac{7}{16} < \alpha \le \frac{7}{8}$.   The strategy in both cases is similar, with the difference lying in 
the quantity we multiply and divide by in order to utilize Lemma \ref{cal}.

{\bf The case $0 < \alpha \le \frac{7}{16}$.}
In this case, we   multiply and divide by $(t')^{\frac{7}{8}}$ inside the 
integrand
and then pull out the factor of
$(t') ^{\frac{7}{8}} \|f(t') \|_{L^4}\|g(t') \|_{L^4}$ by taking a supremum over time.
We thus get
\begin{align}
\notag
&\sup_{t \in (0,T)} 
 t^{\alpha }  \int_{0}^{t}
(t - t')^{-\frac{1}{8}}
\| f   (t') \|_{L^4} \| g (t')  \|_{L^4} dt' \\
\label{bilinear-inter-2}
&\quad\quad\quad\quad
\le
\sup_{t \in (0,T)} 
 t^{\alpha}
\Big[
\sup_{t' \in (0, t)}
(t') ^{\frac{7}{8}} \|f(t') \|_{L^4}\|g(t') \|_{L^4}
\Big]
\int_{0}^{t} 
(t- t')^{-\frac{1}{8}} (t') ^{- \frac{7}{8} } dt'.
\end{align}
The integral in \eqref{bilinear-inter-2}, can now be handled with
the Beta distribution estimate,  Lemma \ref{cal}.  We see  that
the corresponding $r$ will be $r = 1/8 + 7/8 - 1= 0$, and therefore
\begin{align*}
\int_{0}^{t} 
(t- t')^{-\frac{1}{8}} (t') ^{- \frac{7}{8} } dt'
\lesssim 1.
\end{align*}
To handle the composition of suprema, we first note that 
as $\alpha > 0$, we have $t^\alpha \le T^\alpha$.  Then the interior supremum 
can be bounded above by taking the supremum to be over the full interval $(0,T)$.
Putting these estimates together gives us
\begin{align*}
&\sup_{t \in (0,T)} 
 t^{\alpha}
\Big[
\sup_{t' \in (0, t)}
(t') ^{\frac{7}{8}} \|f(t') \|_{L^4}\|g(t') \|_{L^4}
\Big]
\int_{0}^{t} 
(t- t')^{-\frac{1}{8}} (t') ^{- \frac{7}{8} } dt'  
\\
&\quad\quad\quad\quad\quad\lesssim
T^\alpha 
\sup_{t \in (0,T)} 
t^{\frac{7}{8}} \|f(t) \|_{L^4}\|g(t') \|_{L^4}
.
\end{align*}
Thus, so long as $ 0 < 2 \alpha \le 7/8$ we see that
$t^{\frac{7}{8} - 2\alpha}$ will be positive, and we thus continue our estimation by multiplying and dividing  by 
$t^{2\alpha}$.  After bounding above by pulling out the factor of $t^{\frac{7}{8} - 2\alpha}$,
we get 

\begin{align*}
T^\alpha 
\sup_{t \in (0,T)} 
t^{\frac{7}{8}} \|f(t) \|_{L^4}\|g(t') \|_{L^4}
\le T^{\frac{7}{8} - \alpha}
\sup_{t \in (0,T)} 
t^{2 \alpha} \|f(t) \|_{L^4}\|g(t') \|_{L^4}.
\end{align*}
Chaining these inequalities gives us \eqref{bilinear-2}.

{\bf The case $\frac{7}{16} < \alpha \le \frac{7}{8}$.}  To continue bounding \eqref{alpha-ub-1}, 
we follow a similar approach to the above case but 
alternatively multiply and divide by $(t')^{2 \alpha}$.  We thus obtain
\begin{align}
\notag
&\sup_{t \in (0,T)} 
 t^{\alpha }  \int_{0}^{t}
(t - t')^{-\frac{1}{8}}
\| f   (t') \|_{L^4} \| g (t')  \|_{L^4} dt' \\
\label{bilinear-inter-3}
&\quad\quad\quad\quad
\le
\sup_{t \in (0,T)} 
 t^{\alpha}
\Big[
\sup_{t' \in (0, t)}
(t') ^{2\alpha} \|f(t') \|_{L^4}\|g(t') \|_{L^4}
\Big]
\int_{0}^{t} 
(t- t')^{-\frac{1}{8}} (t') ^{- 2\alpha} dt'.
\end{align}
Our hypothesis of $ \frac{7}{16} < \alpha \le \frac{7}{8}$ 
in this case satisfies the requirement of Lemma \ref{cal}.
We see that for 
$r =  \frac{1}{8} + 2\alpha - 1$ to satisfy
$0 \le r \le 1$ is equivalent to 
$\frac{7}{16} \le \alpha \le \frac{15}{16}$. 
Thus the Beta distribution estimate implies that we have
\begin{align*}
\int_{0}^{t} 
(t- t')^{-\frac{1}{8}} (t') ^{- 2\alpha} dt'
\lesssim
t^{-\frac{1}{8} - 2 \alpha + 1}.
\end{align*}
Continuing our estimation on \eqref{bilinear-inter-3}, and noting that 
$\alpha  - r = \frac{7}{8} - \alpha$, we get
\begin{align}
\notag
&\sup_{t \in (0,T)} 
 t^{\alpha}
\Big[
\sup_{t' \in (0, t)}
(t') ^{2\alpha} \|f(t') \|_{L^4}\|g(t') \|_{L^4}
\Big]
\int_{0}^{t} 
(t- t')^{-\frac{1}{8}} (t') ^{- 2\alpha} dt' \\
&\quad\quad\quad\quad
\lesssim
\sup_{t \in (0,T)} 
 t^{\frac{7}{8} - \alpha}
\Big[
\sup_{t' \in (0, t)}
(t') ^{2\alpha} \|f(t') \|_{L^4}\|g(t') \|_{L^4}
\Big]
\end{align}
To bound the exterior supremum,  we see that our hypothesis in this case
implies  that $\frac{7}{8} - \alpha \ge 0$.  We  therefore get
\begin{align*}
\sup_{t \in (0,T)} 
 t^{\frac{7}{8} - {\alpha}}
\Big[
\sup_{t' \in (0, t)}
(t') ^{2\alpha} \|f(t') \|_{L^4}\|g(t') \|_{L^4}
\Big]
\le
T^{\frac{7}{8} - {\alpha}}
\sup_{t \in (0, T)}
t^{2\alpha} \|f(t) \|_{L^4}\|g(t) \|_{L^4}.
\end{align*}
Chaining these inequalities thus gives us \eqref{bilinear-2}.
\end{proof}

%
%
%
%

%

\vskip0.2in
\begin{minipage}[b]{9 cm}
   Curtis Holliman ({\it Corresponding author})  \\
    Department of Mathematics \\ 
     The Catholic University of America\\
Washington, DC 20064\\
      E-mail: {\it holliman$@$cua.edu}
\end{minipage}
\hfill
\begin{minipage}[b]{7 cm}
\noindent{Harry Prieto}\\
Department of Mathematics\\
The Catholic University of America\\
Washington, DC 20064\\
E-mail: {\it 22prieto@cua.edu}
\end{minipage}

\end{document}